\title[On N-Distal Homeomorphisms]
      {On N-Distal Homeomorphisms}
\author[E. Rego]{E. Rego}
\address{Southern University of Science and Technology, Shenzhen-China}
\email{rego@sustech.edu.cn}
\author[J.C. Salcedo]{J.C. Salcedo}
\address{Instituto de Matem\'atica, Universidade Federal do Rio de Janeiro, P. O. Box 68530, 21945-970 Rio de Janeiro, Brazil.}
\email{salcedo@ufrj.br}
\thanks{{\it 2020 Mathematics Subject Classification}. Primary: 37B05; Secondary: 37B40.\\
{\it Key words and phrases}. Distal, $N$-distal, $N$-equicontinuous, $N$-distal extensions, Topological Entropy.\\
J.C.S. was partially supported by CNPq and CAPES from Brazil.
}
\newtheorem{theorem}{Theorem}[section]
\newtheorem{proposition}[theorem]{Proposition}
\newtheorem{lemma}[theorem]{Lemma}
\theoremstyle{definition}
\newtheorem{definition}[theorem]{Definition}
\newtheorem{remark}[theorem]{Remark}
\newtheorem{example}[theorem]{Example}
\newcommand{\de} {\delta}
\newcommand{\la} {\lambda}
\newcommand{\si} {\sigma}       \newcommand{\Si}{\Sigma}
\newcommand{\Z}{\mathbb{Z}}
\newcommand{\N}{\mathbb{N}}
\newcommand{\R}{\mathbb{R}}
\newcommand{\eps}{\varepsilon}
\def\C{\mathbb{C}}
\def\R{\mathbb{R}}
\def\Z{\mathbb{Z}}
\def\N{\mathbb{N}}
\def\Q{\mathbb{Q}}
\def\B{\mathcal{B}}
\def\O{\mathcal{O}}
\DeclareMathAlphabet{\mathpzc}{OT1}{pzc}{m}{it}
\DeclareMathOperator{\diam}{diam}
\begin{document}

\begin{abstract}
We study the $N$-distal property for homeomorphisms on compact metric spaces. For instance, we define $N$-equicontinuity and prove that every $N$-equicontinuous systems are $N$-distal. We introduce the notion of $N$-distal extensions and $N$-distal factors. We also prove that a $M$-distal extension of $N$-distal homeomorphisms is $MN$-distal and present a non-trivial $N$-distal factor for $N$-distal homeomorphisms having Ellis semigroup with a unique minimal ideal. It is also shown that transitive $N$-distal homeomorphisms have at most $N-1$ minimal proper subsystems. Finally, we prove that topological entropy vanishes for $N$-distal systems on compact metric spaces with some nice behavior on the non-wandering set. These results generalize previous ones for distal systems \cite{Furst63},\cite{Parry67}.

\end{abstract}

\maketitle

\section{Introduction}

The distal homeomorphisms were introduced by Hilbert in order to give a topological characterization for the concept of a rigid group of motions (see \cite{Zipp41}). Such homeomorphisms have been widely studied in the literature. For instance, Ellis reduced them to the enveloping semigroups and the minimal distal systems \cite{Ellis58}; Fürstenberg proved a structure theorem \cite{Furst63} and Parry proved that they have zero entropy \cite{Parry67}.

Generalizations of  distal systems include the {\it point distal flows} by Veech \cite{Veech70} who obtained a structure theorem from them and the more recent {\it mean distal systems} by Ornstein and Weiss \cite{OrnWeis04}. From the measure-theoretic viewpoint we can mention Parry's systems with separating sieve also known as {\it measure distal systems} \cite{Parry67}. Zimmer \cite{Zim76} proved a structure theorem for the measure distal systems. Lindenstrauss  proved that any ergodic measure distal system can be realized as a minimal distal system with a fully supported invariant Borel measure \cite{Linden99}. Fürstenberg introduced the notion of a {\it tight system} as one in which, after removing a negligible set, there are no distinct mean proximal points. Ornstein and Weiss also proved  that tight systems have no finite positive entropy \cite{OrnWeis04}.

New classes of systems which generalize the notion of distality were recently introduced by Lee and Morales \cite{ACKM20}, \cite{MorLee17}. They included the notion of $N$-{\it distal}, {\it countably distal}, $cw$-{\it distal} and {\it measurable distal}.

In this paper we study $N$-distal self-homeomorphisms on compact metric spaces, discuss some of their basic dynamical properties and consider the extent to which certain classical results that are already available for distal systems are also valid for $N$-distal systems. Consequently we are interested in studying the relation between $N$-distality and other dynamical properties.  For instance, in connection with the fact that  equicontinuous systems are always distal, we introduce the notion of $N$-equicontinuity (see Definition \ref{defNequi}) and prove a generalization of this result showing that every $N$-equicontinuous systems are $N$-distal in Theorem \ref{Nequi}.

In addition we show how the $N$-distality behaves under homomorphisms. In this way we extend the notion of distal extensions and distal factors defining $N$-distal extensions and $N$-distal factors (see Definition \ref{Ndisext}) and prove in Theorem \ref{DisExt} that a $M$-distal extension of a $N$-distal homeomorphism is $MN$-distal. This result generalizes the previous one in \cite{Aus88}. Using the Ellis semigroups theory we give a criterion for the existence of a non-trivial $N$-distal factor for $N$-distal homeomorphisms in Theorem \ref{NDisFac}.

We further investigate how $N$-distality interacts with topological transitivity. Actually, in Theorem \ref{TranDis} we use the Ellis semigroups theory to obtain a restriction on the number of minimal subsystems for a transitive $N$-distal homeomorphism. This result is a $N$-distal version for the one given in \cite{Aus88}.

Finally, we study the topological entropy of $N$-distal systems. W. Parry \cite{Parry67} showed that  the topological entropy of  a distal system vanishes. One of the key facts for this result is  that the phase space of a distal systems decomposes into a union of minimal subsets \cite{Ellis58}. On the other hand, the same is not valid for $N$-distal systems as we show in Example \ref{ex3dis}. Thus, under a condition that guarantees this kind of decomposition on the non-wandering set, in Theorem \ref{h0}, we prove that $N$-distal homeomorphims have zero topological entropy.

This paper is divided as follows. In section \ref{PreFR} we introduce the main concepts and tools used through out this work as well as some basic properties, examples of $N$-distal systems and generalizations of classical results are established. In section \ref{TranNDS} we investigate how $N$-distality behaves together with topological transitivity and discuss the relation between $N$-distality and expansivity. The final section \ref{TopE} is devoted to the proof of the Theorem \ref{h0}.

%proving the Theorem \ref{TranDis}

\section{Preliminaries and First Results}\label{PreFR}

In this section we introduce the class of $N$-distal systems, study some of its dynamical properties and their connection with other dynamical concepts. First, we recall some basic definitions and state some notation that we will use in this work.

Throughout this paper unless otherwise stated $X$ will denote a compact metric space with metric $d$ and $f:X\to X$ be a homeomorphism. $f^n$ is the $n$-fold self-composition of the map $f$ if $n>0$, on the other hand the $n$-fold composition of $f^{-1}$ if $n<0$ and $f^0$ is the identity map, denoted by $Id$. The orbit of a point $x$ under $f$ is the set $\{f^n(x); n\in \Z\}$ which we denote as ${\O}_{f}(x)$. The closure of $A\subseteq X$ is be denoted by $\bar{A}$ while the cardinality of $A$ is denoted by $\# A$ . If $x\in X$ and $\delta>0$ we denote the {\it open ball} around $x$  by $B_{\delta}(x)$.   

Let $x,y\in X$. Recall that $x$ is said to be {\it proximal} to $y$ if $\inf_{n\in \Z} d(f^n(x),f^n(y))=0$. Clearly, the proximal relation is reflexive, symmetric and invariant, but in general it is not transitive  (see example \ref{exNdis}). The pair $(x,y)$ is a {\it proximal pair} if $x$ is proximal to $y$ and  $(x,y)$ is a {\it distal pair} if it is not a proximal pair. Let us denote by $P(x)$ the set of points $y\in X$ such that $(x,y)$ are proximal pairs, i.e. the {\it proximal cell} (cf. \cite[p. 66]{Aus88}) of $x$.

\begin{equation*}
P(x)=\lbrace y\in X\,\,:\,\,\inf_{n\in \Z} d(f^n(x),f^n(y))=0\rbrace.
\end{equation*}
Notation $P_{f}(x)$ will be used to indicate dependence on $f$ if necessary. Let us recall the definition of distality.

\begin{definition}
A point $x\in X$ is said to be {\it distal point} for $f$ if $P(x)$ reduces to $\{x\}$. Let $Dist(f)$ denote the set of distal points of $f$. We say that $f$ is {\it distal} if $Dist(f)=X$.
\end{definition}

Basic examples of distal homeomorphisms are the identity map and isometries of a metric space. Other non-trivial examples are the equicontinuous homeomorphisms or equivalently uniform almost periodic homeomorphisms (see \cite[p. 36]{Ellis69}). In contrast, distal homeomorphisms are not necessary equicontinuous, a counterexample is the homeomorphism $f:D_{1}\to D_{1}$ defined on the unit disc $D_{1}=\{z\in\C\,:\,\|z\|\leq 1\}$ and given by the formula $f(z)=z\exp(2\pi i\vert z\vert)$. 

Let us recall that a subset $A\subseteq X$ is said to be {\it minimal} if it is closed, $f$-invariant and has no proper closed $f$-invariant subsets. Last condition is equivalent to the orbit of any point in $A$ be dense in $A$. A homeomorphism $f$ is {\it minimal} if $X$ is a minimal set.

Remarkable advances were made when it was established that there are examples of distal and minimal homeomorphism that are not equicontinuous see for instance \cite[Theorem 5.14]{Aus88} or Example \ref{exDMnNE}. The nature these examples led Fürstenberg to his path-breaking structure theorem \cite{Furst63}. 

\subsection{$N$-distal homeomorphisms}\label{DefandPro}

Here we define the $N$-distal homeomorphims, state some of its properties and give a few examples. Recently in \cite{ACKM20} the authors defined the following new classes of systems.

\begin{definition}\label{defNdis}
We say that	$f$ is a $N${\it -distal} (for some $N\in {\N}^{+}$) map if $P(x)$ has at most $N$ points and $f$ is a {\it countably-distal} map if the set $P(x)$ is a countable subset of $X$, for all $x\in X$.
\end{definition}

Our first remark is that  distality clearly implies $N$-distality and $N$-distality clearly implies countably-distality, but the converse do not always hold. For instance, consider the following examples.

\begin{example}\label{ex3dis}
There is a compact metric space $X$ and a $3$-distal homeomorphism $f:X\to X$ which is not $2$-distal.

To see this, let $D=\{(\theta,r)\in\R^2\,:\,1\leq|r|\leq 2\}$ be the annulus in polar coordinates. Define $F:D\to D$ through $F(\theta,r)=(\theta+k\,(\text{mod\,1}),(r-1)^2+1)$ with $k$ an irrational number. Consider $p=(0,\frac{3}{2})$ and $X=S_{1}(\textbf{0})\cup {\O}_{F}(p)\cup S_{2}(\textbf{0})$ where $\textbf{0}=(0,0)$ and $S_{r}(x)$ denotes the circle of radius $r$ centered at $x$. Set $f=F|_{X}$, the restriction of $F$ to $X$. Thus the pairs $(p,(0,1))$ and $(p,(0,2))$ form  proximal pairs for $f$, therefore $f$ is $3$-distal but it is not $2$-distal. 

\end{example} 

If we slightly modify the previous example, we obtain

\begin{example}\label{exNdis}
There are $N$-distal homeomorphisms which are not $N-1$-distal for $N\geq 4$ and there is a countably-distal homeomorphisms which is not $N$-distal for every positive integer $N$.

Let $F:D\to D$ be the function defined in the previous example.
\begin{enumerate}[1.]
\item \label{exNdis1}In $D$ consider $p_n=(0,\frac{1}{n}+1)$ with $2\leq n\leq N-1$ for $N\geq 3$. Define $X=\partial D\cup(\cup_{N-1\geq n\geq 2}\O(p_n))$, where $\partial D$ denotes the boundary of $D$. Let $f$ be given by $f=F|_{X}$. Then $f$ is a $N$-distal homeomorphism which is not $N-1$-distal, since $P_{f}(0,\frac{3}{2})=\left\{(0,1),(0,2)\right\}\cup\left\{p_{n}\,\,:\,\,2\leq n\leq N-1\right\}$.
\item \label{exNdis2} Similarly, consider in $D$ the points $p_n=(0,\frac{1}{n}+1)$ with $n\geq 2$. Define $Y=\partial D\cup(\cup_{n\in\N}\O(p_n))$ and the homeomorphism $g:Y\to Y$ by $g=F|_Y$. Thereby $P_{g}(0,\frac{3}{2})=\left\{(0,1),(0,2)\right\}\cup\left\{p_{n}\,\,:\,\,n\geq 2\right\}$, hence $g$ countably-distal map but it is not $N$-distal for every positive integer $N$. 
\end{enumerate}

\end{example}

\begin{example}\label{ex2dis}
A modification of Example \ref{ex3dis} can be made to obtain a $2$-distal homeomorphism which is not distal. Indeed, the space in this example is the union of two concentric circles and the orbit of the point $p=(0,\frac{3}{2})$ between them. The application of the homeomorphism $f$ makes the points in the circles stay in the circles, while the point $p$ approach the inner circle in future and the outer circle in the past.  If we consider only one circle $S_{1}(\textbf{0})$ (or $S_{2}(\textbf{0})$) a change in the dynamics of the point $p$ to approach the circle $S_{1}(\textbf{0})$ (or $S_{2}(\textbf{0})$) for both future and past, we have the desired example.
\end{example}

The above examples show that these three levels of distality are different.

Before stating the next remark, let us recall  that $A\subset \Z$ is said to be {\it syndetic} if there is $F\subset\Z$ finite such that $\Z = F + A$. We say that a point $x\in X$ is {\it almost periodic} with respect to a homeomorphism $f : X\to X$ if $\lbrace n\in\Z\,\,:\,\,f^{n}(x)\in U\rbrace$ is syndetic for every neighborhood $U$ of $x$ (see \cite{Brin}).  A homeomorphism $f$ is said to be {\it pointwise almost periodic} if every $x\in X$ is almost periodic w.r.t. $f$. Oftenly, almost periodic points are called {\it minimal points}. This is because a point $x$ is almost periodic if and only if the closure of $\O(x)$ is a minimal set, for more details see \cite[Section 2.1]{Brin}. We note in this regard that

\begin{remark}
It is well known that every distal homeomorphism is pointwise almost periodic (see for instance \cite[Proposition 2.7.5]{Brin}). Nevertheless, this is not true in general for $N$-distal homeomorphisms for the example \ref{ex3dis}.
\end{remark} 

Now we state our first result that deals with some elementary properties of $N$-distal homeomorphism. Let us first recall that if $f$ and $g$ are homeomorphisms on compact metric spaces $X$ and $Y$, respectively, the {\it direct product} of $f$ and $g$ is the map $f\times g:X\times Y\to X\times Y$ defined by $(f\times g)(x,y)=(f(x),g(y))$. The product turns out to be a homeomorphism on $X\times Y$ if we equip the space $X\times Y$ with the metric $d^2((x_{1},y_{1}),(x_{2},y_{2}))=\max\lbrace d_{1}(x_{1},y_{1}),d_{2}(x_{2},y_{2})\rbrace$ where $d_{1}$ and $d_{2}$ are the metrics on $X$ and $Y$, respectively.  

\begin{proposition}\label{dispro}
Let $f:X\to X$ and $g:Y\to Y$ be homeomorphisms on $X$ and $Y$ compact metric spaces. The following properties hold:
\begin{enumerate}[(i)]
\item If $f$ is $N$-distal, then $f^k$ is $N$-distal for $k\in\Z$.
\item If $f$ is $N$-distal and $g$ is $M$-distal, then $f\times g$ is $MN$-distal.
\item If $f$ and $g$ are conjugated homeomorphisms, $f$ is $N$-distal if and only if $g$ is $N$-distal. 
\end{enumerate}

\end{proposition}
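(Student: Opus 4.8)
The plan is to treat the three items separately, in each case by describing how the proximal cell transforms under the operation in question and then reading off the cardinality bound. Throughout I will use freely that, since $X$ and $Y$ are compact, every continuous self-map and every conjugacy between them is uniformly continuous; this is precisely what allows the condition $\inf_n d(\cdot,\cdot)=0$ to be transported through iterates and homeomorphisms.

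For (i), the central claim is that $P_{f^k}(x)=P_f(x)$ for every $x$ and every $k\neq 0$ (the case $k=0$ is trivial, since $f^0=Id$ is distal, hence $N$-distal). The inclusion $P_{f^k}(x)\subseteq P_f(x)$ is immediate: the infimum defining proximality for $f^k$ runs only over the subset $\{kn:n\in\Z\}$ of $\Z$, so it cannot be smaller than the full infimum for $f$. The reverse inclusion is the delicate point. Given a proximal pair $(x,y)$ for $f$, I would choose a sequence $m_i$ with $d(f^{m_i}(x),f^{m_i}(y))\to 0$, write $m_i=kq_i+r_i$ with $0\leq r_i<k$, and pass to a subsequence on which the residues $r_i\equiv r$ are constant by pigeonhole. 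Applying the uniformly continuous map $f^{-r}$ then yields $d(f^{kq_i}(x),f^{kq_i}(y))\to 0$, so $(x,y)$ is proximal for $f^k$. I expect this to be the main obstacle, because the infimum for $f$ need not be realized along multiples of $k$; the residue-class pigeonhole combined with uniform continuity is exactly what repairs this. Once $P_{f^k}=P_f$ is established, the bound $\#P(x)\leq N$ transfers verbatim.

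For (ii), I would establish the single inclusion $P_{f\times g}(x,y)\subseteq P_f(x)\times P_g(y)$. If $(x',y')$ is proximal to $(x,y)$ under $f\times g$, there is a sequence $n_i$ along which $\max\{d_1(f^{n_i}(x),f^{n_i}(x')),d_2(g^{n_i}(y),g^{n_i}(y'))\}\to 0$; since the maximum dominates each coordinate, both coordinates tend to $0$ along the \emph{same} sequence, giving $x'\in P_f(x)$ and $y'\in P_g(y)$. Taking cardinalities, $\#P_{f\times g}(x,y)\leq \#P_f(x)\cdot\#P_g(y)\leq NM$, so $f\times g$ is $MN$-distal. Note that only this inclusion is needed; the reverse containment is irrelevant to the cardinality estimate.

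For (iii), let $h$ be a conjugacy with $g=h\circ f\circ h^{-1}$, so that $g^n=h\circ f^n\circ h^{-1}$. Writing $x'=h^{-1}(y')$ and using $g^n(h(x))=h(f^n(x))$, the quantity $\inf_n d_2(g^n(h(x)),g^n(y'))$ vanishes if and only if $\inf_n d_1(f^n(x),f^n(x'))$ vanishes, because $h$ and $h^{-1}$ are both uniformly continuous and hence respectively preserve and reflect the property that a distance sequence tends to $0$. This shows that $h$ restricts to a bijection $P_f(x)\to P_g(h(x))$, so the two proximal cells have equal cardinality; since $h$ is onto, taking the supremum over all $x$ gives that $f$ is $N$-distal if and only if $g$ is $N$-distal.
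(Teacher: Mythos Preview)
Your proposal is correct, and for items (ii) and (iii) it is essentially the paper's argument: the paper too uses only the inclusion $P_{f\times g}(x,y)\subseteq P_f(x)\times P_g(y)$ in (ii), and in (iii) transports proximal pairs through the conjugacy (the paper phrases this as a contrapositive, you as a bijection $h:P_f(x)\to P_g(h(x))$, but the content is identical).

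The one notable difference is in (i). The paper observes only the trivial inclusion $P_{f^k}(x)\subseteq P_f(x)$, which already gives $\#P_{f^k}(x)\leq\#P_f(x)\leq N$ and finishes the proof. You instead prove the full equality $P_{f^k}(x)=P_f(x)$ via the residue-class pigeonhole argument and uniform continuity of $f^{-r}$. That argument is correct and the equality is a nice sharpening, but it is not needed for the proposition as stated; the paper's one-line version suffices. (A minor cosmetic point: your division $m_i=kq_i+r_i$ with $0\le r_i<k$ tacitly assumes $k>0$; for $k<0$ one either repeats the argument with $|k|$ or notes that $P_f=P_{f^{-1}}$ since the infimum ranges over all of $\Z$.)
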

 
\begin{proof}
Observe that by the definition of proximal cell we have $P_{{f}^k}(x)\subseteq P_{f}(x)$ for every $x\in X$ and therefore $(i)$ follows.

Similarly, $(ii)$ is a consequence of $P_{f\times g}(x,y)\subseteq P_{f}(x)\times P_{g}(y)$ for every $(x,y)\in X\times Y$, and this follows from the definitions of $d^2$ on $X\times Y$ and proximal cell.

Finally to prove $(iii)$, suppose that $h$ is the conjugacy homeomorphism between $f$ and $g$. If $g$ is not $N$-distal. Then, there exists $y\in Y$ such that $P_{g}(y)\setminus\{y\}$ has at least $N$ points. Set $x=h^{-1}(y)$. We claim that $P_{f}(x)\setminus\{x\}$ has at least $N$ points. Indeed, let $p_1,\ldots,p_N$ be distinct points in $P_{g}(y)\setminus\{y\}$. It follows that $d(g^{n_k^i}(y),g^{n_k^i}(p_i))\to 0$ as $k\to \infty$ for $i=1,\ldots,N$. Since $h^{-1}$ is continuous, we have $$d(h^{-1}(g^{n_k^i}(y)),h^{-1}(g^{n_k^i}(p_i)))=d(f^{n_k^i}(h^{-1}(y)),f^{n_k^i}(h^{-1}(p_i)))\to 0.$$ Thus $h^{-1}(p_i)\in P_{f}(x)\setminus \{x\}$ for every $i\in\{1,\ldots,N\}$. Therefore $f$ is not $N$-distal.
\end{proof}

\begin{remark}
The above results are also valid for countably-distal homeomorphisms.
\end{remark}

Now, we are going to investigate the relation between equicontinuity and $N$-distality. Since equicontinuous systems are examples of distal systems, every $N$-distal and equicontinuous homeomorphism  must be distal. 

In order to state a weaker form of equicontinuity we use the concept of $N$-diameter (for some $N\in {\N}^{+}$) defined by the authors in \cite{MeMoVi}, which in turn were inspired by the definition of N-sensitivity given by Xiong in \cite{Xio05}, namely, 

\begin{definition}
Let $X$ be  a compact metric space and $N$ be a positive integer. If $A$ is a subset of $X$, define the {\it $N$-diameter} of $A$ by
\begin{equation*}
\diam_N(A)=\sup\limits_{B\subseteq A}\{\min\limits_{x,y\in B} \{d(x,y)\,:\, x\neq y\}\,:\, B\in \mathcal{C}_{N+1}(X)\}.
\end{equation*}
where $\mathcal{C}_N(X)$ denotes  the set of subsets of $X$ with $N$ elements.
\end{definition}

This concept satisfies the following properties

\begin{lemma}\label{ProNdiam}\cite{MeMoVi} Let $X$ be  a compact metric space and $N$ a positive integer. If $A$,$B$ are subsets of $X$. Then

\begin{enumerate}[(i)]
\item  $\diam_{1}(A) = \diam(A)$, where $\diam(A)$ is the usual diameter of $A$.
\item $\diam_{N}(A) = 0$ if and only if $\#A \leq N$.
\item $diam_{N}(A)\leq diam_{M}(A)$, whenever $M \leq N$.
\item $diam_{N}(A) \leq diam_{N}(B)$, whenever $A \subseteq B$.
\item $diam_{N}(A) = diam_{N}(\bar{A})$.
\item $\frac{diam(A)}{N} \leq diam_{N}(A)$, whenever $A$ is connected. 
\end{enumerate}

\end{lemma}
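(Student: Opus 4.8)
The plan is to treat (i)–(v) as direct consequences of the definition and to reserve the real work for (vi). Recall that $\diam_N(A)$ is the supremum of $\min_{x\neq y\in B}d(x,y)$ taken over all $B\subseteq A$ with $\#B=N+1$. For (i) I would simply note that when $N=1$ the competing sets $B$ are exactly the two-point subsets $\{x,y\}$ of $A$, so the inner minimum is just $d(x,y)$ and the supremum recovers the ordinary diameter. Property (iv) is equally immediate: if $A\subseteq B$, then every $(N+1)$-point subset of $A$ is also an $(N+1)$-point subset of $B$, so the supremum defining $\diam_N(A)$ ranges over a subfamily of the one defining $\diam_N(B)$, giving $\diam_N(A)\le\diam_N(B)$.

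For (ii) I would use the convention $\sup\emptyset=0$. If $\#A\le N$ there are no $(N+1)$-point subsets of $A$, so the supremum is empty and $\diam_N(A)=0$; conversely, if $\#A\ge N+1$ I would choose $N+1$ distinct points, whose pairwise distances are finitely many positive numbers, so their minimum is strictly positive and $\diam_N(A)>0$. For (iii), given $M\le N$ and any $B\subseteq A$ with $\#B=N+1$, I would select $B'\subseteq B$ with $\#B'=M+1$; since the minimum of $d$ over the smaller collection of pairs in $B'$ is at least the minimum over all pairs in $B$, each competitor for $\diam_N$ is dominated by a competitor for $\diam_M$, and taking suprema yields $\diam_N(A)\le\diam_M(A)$.

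For (v) the inequality $\diam_N(A)\le\diam_N(\bar A)$ is a special case of (iv). For the reverse I would argue by approximation: given distinct $x_0,\dots,x_N\in\bar A$, pick $y_i\in A$ with $y_i\to x_i$; since the $x_i$ are distinct the $y_i$ are eventually distinct, and by continuity of $d$ the quantity $\min_{i\neq j}d(y_i,y_j)$ tends to $\min_{i\neq j}d(x_i,x_j)$. Hence every value realized by an $(N+1)$-point subset of $\bar A$ is a limit of values realized in $A$, so $\diam_N(\bar A)\le\diam_N(A)$.

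The substantive step is (vi), and this is where I expect the main difficulty. Fix a connected $A$ and set $D=\diam(A)$. Given $\varepsilon>0$, choose $a,b\in A$ with $d(a,b)>D-\varepsilon$ and consider the continuous function $\phi(x)=d(a,x)$. Since $A$ is connected, $\phi(A)$ is an interval containing $\phi(a)=0$ and $\phi(b)>D-\varepsilon$, hence $[0,D-\varepsilon]\subseteq\phi(A)$. By the intermediate value theorem I would pick, for each $i=0,\dots,N$, a point $x_i\in A$ with $\phi(x_i)=\frac{i}{N}(D-\varepsilon)$. These $N+1$ points are distinct, and the reverse triangle inequality gives, for $i<j$,
\[
d(x_i,x_j)\ge|\phi(x_i)-\phi(x_j)|=\frac{(j-i)(D-\varepsilon)}{N}\ge\frac{D-\varepsilon}{N}.
\]
Thus $\min_{i\neq j}d(x_i,x_j)\ge\frac{D-\varepsilon}{N}$, so $\diam_N(A)\ge\frac{D-\varepsilon}{N}$; letting $\varepsilon\to0$ gives $\diam_N(A)\ge\frac{D}{N}$. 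The only delicate points are that the diameter can be approximated by an actual pair $a,b\in A$ (immediate from the definition of supremum) and that $\phi(A)$ genuinely contains all of $[0,D-\varepsilon]$, which is exactly where connectedness is used.
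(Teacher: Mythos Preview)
Your argument is correct in every part; the only comment is that the paper does not actually supply a proof of this lemma---it is quoted verbatim from \cite{MeMoVi} and left unproved---so there is no in-paper argument to compare against. Your treatment of (i)--(v) as routine consequences of the definition and your intermediate-value construction for (vi), choosing $x_i\in A$ with $d(a,x_i)=\tfrac{i}{N}(D-\varepsilon)$ along the connected image $\phi(A)$, is exactly the kind of proof one would expect the cited source to contain.
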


Also we need the following notation for $a>0$ and $x\in X$
\begin{equation*}
R_{\delta}(x)=\{y\in X\,:\,d(f^i(y),f^i(x))<\delta \text{, for some } i\in \Z\}.
\end{equation*}

Notation $R^{f}_{\delta}(x)$ will be useful to indicate dependence of $f$. Clearly $R_{\delta}(x)$ is an open set and a superset of $B_{\delta}(x)$.\\

We can now define a generalization of equicontinuity. The classical definition of equicontinuity states that a homeomorphism $f$ of a compact metric space $(X, d)$ is said to be {\it equicontinuous} if the family of all iterates of $f$ is an equicontinuous family, i.e., for any $\epsilon>0$, there exists $\delta>0$ such that $d(x, y)<\delta$ implies that $d\left(f^{n}(x), f^{n}(y)\right)<\epsilon$ for all $x,y\in X$ and $n \in \mathbb{Z}$ (cf. \cite[p. 45]{Brin}). To be equicontinuous homeomorphism, it equivalent to say that for every $\eps>0$ there exists $\de>0$ such that $\diam(f^{n}(B_{\de}(x)))<\eps$ for all $x\in X$ and $n\in \Z$. This definition suggests the following one.

\begin{definition}\label{defNequi}
We say that $f$ is $N$-{\it equicontinuous} (for some $N\in {\N}^{+}$) if for every $\eps>0$ there exists $\de>0$ such that $\diam_N(f^{n}(R_{\de}(x)))<\eps$ $\forall x\in X$, $\forall n\in \Z$. 
\end{definition}

The $1$-equicontinuous homeomorphisms are precisely the  equicontinuous ones. In fact, observe that from $N$-equicontinuous definition, $(i)$ and $(iv)$ for $n=1$ in Lemma \ref{ProNdiam} it may be concluded that $1$-equicontinuity implies equicontinuity. Conversely, if $f:X\to X$ is equicontinuous. Let $\eps>0$ and $x\in X$, by equicontinuity, for $y,z\in X$ exists $\de_{1},\de_{2}>0$ such that if $d\left(f^{k_{1}}(x), f^{k_{1}}(y)\right)<\de_{1}$ and $d\left(f^{k_{2}}(x), f^{k_{2}}(z)\right)<\de_{2}$ for some $k_{1},k_{2}\in\Z$, then $d\left(f^{n}(x), f^{n}(y)\right)<\frac{\epsilon}{2}$ and  $d\left(f^{n}(x), f^{n}(z)\right)<\frac{\epsilon}{2}$ for all $n\in\Z$. Set $\de=\min\{\de_{1},\de_{2}\}$. Hence it follows from the triangle inequality and supremum definition that

$$\diam_1(f^{n}(R_{\de}(x)))=\sup\limits_{y,z\in f^{n}(R_{\de}(x))}\{d(y,z)\}=\sup\limits_{y,z\in R_{\de}(x)}\{d\left(f^{n}(y), f^{n}(z)\right)\}<\eps,$$

for all $n\in\Z$. Therefore, $f$ is $1$-equicontinuous.

Clearly, every $N$-equicontinuous homeomorphism is $M$-equicontinuous homeomorphism for all $N\leq M$. It follows that every equicontinuous homeomorphism is $N$-equicontinuous homeomorphism for every $N \geq 1$ and thus such dynamical systems exist. A more subtle problem is to find $N$-equicontinuous homeomorphisms which are not $M$-equicontinuous homeomorphisms for some $M\leq N$. Indeed, this question is answered positively in Example \ref{exNequi}. Before we present this and a related example we first establish the following results.

Notice that a direct computation shows the following identity for any $n\in\Z$, $x\in X$, and $\delta>0$:
\begin{equation}\label{InvofR}
f^{n}(R_{\delta}(x)) = R_{\delta}(f^{n}(x)).
\end{equation}

Using it we obtain the lemma below.

\begin{lemma}\label{NequiAlt}
Let $N\in {\N}^{+}$. The following properties are equivalent for every homeomorphism $f:X\to X$ on a compact metric spaces $X$:

\begin{enumerate}[(i)]
\item  $f$ is $N$-equicontinuous.
\item For every $\eps>0$ there exists $\de>0$ such that $\diam_N(R_{\de}(x))<\eps$ $\forall x\in X$. 
\end{enumerate}

\end{lemma}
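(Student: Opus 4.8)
The plan is to prove the equivalence $(i)\Leftrightarrow(ii)$ by exploiting the translation identity \eqref{InvofR}, which says $f^n(R_\delta(x))=R_\delta(f^n(x))$. The whole point is that $N$-equicontinuity quantifies over all $x\in X$ and all $n\in\Z$, whereas condition $(ii)$ only quantifies over $x\in X$; the identity \eqref{InvofR} lets us absorb the iterate $f^n$ into a change of base point, so that one quantifier collapses into the other.

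First I would prove $(i)\Rightarrow(ii)$, which is immediate. Assuming $f$ is $N$-equicontinuous, fix $\eps>0$ and obtain the corresponding $\delta>0$ so that $\diam_N(f^n(R_\delta(x)))<\eps$ for all $x\in X$ and all $n\in\Z$. Taking $n=0$ gives $\diam_N(R_\delta(x))<\eps$ for all $x\in X$ (since $f^0=\mathrm{Id}$), which is exactly $(ii)$.

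The direction $(ii)\Rightarrow(i)$ is where the identity does the real work. Assume $(ii)$ holds, fix $\eps>0$, and choose $\delta>0$ so that $\diam_N(R_\delta(z))<\eps$ for every $z\in X$. Now given any $x\in X$ and any $n\in\Z$, I would apply \eqref{InvofR} to rewrite $f^n(R_\delta(x))=R_\delta(f^n(x))$, and then simply invoke the hypothesis at the point $z=f^n(x)\in X$ to conclude $\diam_N(f^n(R_\delta(x)))=\diam_N(R_\delta(f^n(x)))<\eps$. Since $x$ and $n$ were arbitrary, this is precisely $N$-equicontinuity, so $(i)$ holds.

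I do not expect a serious obstacle here: the lemma is essentially a reformulation, and the only ingredient beyond unwinding definitions is the invariance identity \eqref{InvofR}, which is already established. The single point worth stating carefully is that the same $\delta$ works uniformly, and this is guaranteed because condition $(ii)$ is itself uniform over all base points in $X$, so evaluating it at the shifted point $f^n(x)$ introduces no dependence on $n$.
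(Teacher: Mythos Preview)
Your argument is correct, and it is in fact cleaner than the paper's. Both proofs rely on the identity \eqref{InvofR}, but for the direction $(ii)\Rightarrow(i)$ the paper fixes $x$, extracts for each $M\in\Z$ a separate $\delta(M)$ with $\diam_N(R_{\delta(M)}(f^M(x)))<\eps$, and then takes an infimum over $M$ (restricted by a cardinality condition) to manufacture a single $\delta$; it then argues this infimum is positive. You bypass all of this by observing that condition $(ii)$ already hands you a \emph{single} $\delta$ working uniformly for every base point $z\in X$, so specializing to $z=f^n(x)$ and invoking \eqref{InvofR} finishes the proof in one line. Your route is shorter and also avoids a subtle worry in the paper's version: there the constructed $\delta$ ostensibly depends on the initially fixed $x$, whereas $N$-equicontinuity demands a $\delta$ uniform in both $x$ and $n$; your argument makes this uniformity transparent from the start.
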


\begin{proof}
Clearly $(i)$ implies $(ii)$ so it remains to prove that $(ii)$ implies $(i)$. To see this, let $\eps>0$ and $x\in X$, by $(ii)$ for each $M\in\Z$ there is $\delta(M)$ such that $\diam_N(R_{\de(M)}(f^{M}(x)))<\eps$. Define 

\begin{equation*}
\delta=\inf\limits_{M\in\Z}\{\delta(M)\,:\,\diam_N(R_{\de(M)}(f^{M}(x)))<\eps\,\land\, N<\#R_{\de(M)}(f^{M}(x))\}.
\end{equation*}

Observe that $\delta>0$. Indeed, if $\delta=0$ then there exists a $M_{0}\in\Z$ such that $\#R_{\de(M_{0})}(f^{M_{0}}(x))<N$, a contradiction. Therefore, there is $\delta>0$ such that $\diam_N(R_{\delta}(f^{n}(x)))<\eps$ for all $n\in \Z$. As $x$ and $\eps$ are arbitrary we conclude from (\ref{InvofR}) that $f$ is $N$-equicontinuous proving $(i)$.
\end{proof}

A direct application of the above Lemma is the following result that generalizes the one in \cite{Furst63}.

\begin{theorem}\label{Nequi}
$N$-equicontinuous homeomorphisms are $N$-distal.
\end{theorem}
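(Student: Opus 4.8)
The plan is to argue by contradiction, working from the reformulation of $N$-equicontinuity provided by Lemma \ref{NequiAlt}: for every $\eps>0$ there is $\de>0$ with $\diam_N(R_\de(x))<\eps$ for all $x\in X$. The crucial observation linking the two notions is that the proximal cell is always contained in every set $R_\de(x)$. Indeed, if $y\in P(x)$ then $\inf_{n\in\Z} d(f^n(x),f^n(y))=0$, so for any $\de>0$ there is some $i\in\Z$ with $d(f^i(x),f^i(y))<\de$; by the definition of $R_\de(x)$ this means $y\in R_\de(x)$. Hence $P(x)\subseteq R_\de(x)$ for every $\de>0$.

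Now suppose $f$ is not $N$-distal. Then there is a point $x\in X$ whose proximal cell $P(x)$ contains at least $N+1$ distinct points $y_0,\dots,y_N$. Set $\eps_0=\min_{i\neq j} d(y_i,y_j)$, which is strictly positive since the points are distinct. Applying $N$-equicontinuity, in the form of Lemma \ref{NequiAlt}, to $\eps_0$ yields a $\de>0$ such that $\diam_N(R_\de(z))<\eps_0$ for all $z\in X$, in particular for $z=x$.

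On the other hand, by the containment above the set $R_\de(x)$ contains the $(N+1)$-element set $B=\{y_0,\dots,y_N\}\in\mathcal{C}_{N+1}(X)$. Directly from the definition of the $N$-diameter as a supremum over $(N+1)$-point subsets, we then obtain $\diam_N(R_\de(x))\ge \min_{i\neq j} d(y_i,y_j)=\eps_0$, contradicting $\diam_N(R_\de(x))<\eps_0$. Therefore $P(x)$ has at most $N$ points for every $x\in X$, i.e. $f$ is $N$-distal. The only point requiring care, and the closest thing to an obstacle in an otherwise short argument, is keeping the quantifiers in the correct order: one must first fix the $N+1$ proximal points and the resulting gap $\eps_0$, and only then invoke $N$-equicontinuity for that particular $\eps_0$, so that the single $\de$ it produces controls the $N$-diameter of the single set $R_\de(x)$ that simultaneously captures all the chosen points.
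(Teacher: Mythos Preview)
Your proof is correct and follows essentially the same approach as the paper: both argue by contradiction, use Lemma \ref{NequiAlt}, observe that $P(x)\subseteq R_\de(x)$ for every $\de>0$, and derive a contradiction from an $(N+1)$-element subset of $P(x)$. The only difference is in the final step: the paper runs $\eps_n=1/n$ through Lemma \ref{NequiAlt} and uses a pigeonhole argument on the finitely many pairs in $A$ to force two points to coincide, whereas you more economically choose $\eps_0=\min_{i\neq j}d(y_i,y_j)$ once and get an immediate contradiction from the definition of $\diam_N$; your version is slightly cleaner but not a different route.
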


\begin{proof}
Let $f:X\to X$ be a $N$-equicontinuous homeomorphism of a compact metric space $X$. Suppose by contradiction that $f$ is not $N$-distal. Then, there exists $x\in X$ such that $\#P(x)>N$. Thus let $A\in \mathcal{C}_{N+1}(X)$ be such that $x\in A$ and $A\subseteq P(x)$. Set $\eps_n=\frac{1}{n}$ and let $0<\de_n\leq\eps_n$ be given by the $N$-equicontinuity of $f$ for each $n\in\N$.  Notice that since any $y\in A$ is proximal to $x$, then $A \subseteq R_{\de_{n}}(x)$ for every $n\in {\N}^{+}$. Therefore by Lemma \ref{NequiAlt} $\diam_N(R_{\de_{n}}(x))<\eps_n$ for every $n\in\N$. Since $A$ is finite, then there exists $y\in A$ and a sequence $n_j\to \infty$ such that $d(x,y)<\frac{1}{n_j}$ for every $j\in \N$. It follows that $x=y$ and $\#A\leq N$ which is absurd. Therefore, $f$ must be $N$-distal. 
\end{proof}

We now present some related examples. Combining the above Theorem with examples \ref{ex2dis}, \ref{ex3dis} and \ref{exNdis}.\ref{exNdis1} we obtain the following sentence.

\begin{example}\label{exNequi}
There are $N$-equicontinuous homeomorphisms which are not $N-1$-equicontinuous for every $N\geq 2$.

A simple computation shows that the $N$-distal homeomorphisms in Examples \ref{ex2dis}, \ref{ex3dis} and \ref{exNdis}.\ref{exNdis1} are also $N$-equicontinuous for $N=2$, $N=3$ and $N\geq 4$, respectively. As already verified these examples are not $N-1$-distal homeomorphisms. Consequently,  are not $N-1$-equicontinuous homeomorphims by Theorem \ref{Nequi}.  
\end{example}

The converse direction of the Theorem \ref{Nequi} motivate the question whether there are $N$-distal homeomorphims which are not $N$-equicontinuous. As is well known the answer for $N=1$ is positive, as already mentioned an example can be found in \cite[Theorem 5.14]{Aus88}. The answer also turns out to be positive for $N\geq2$ by the following result. Even more, in the next remarkable Fürstenberg's example we deal with a more general problem on the existence of distal and minimal systems that are not necessarily $N$-equicontinuous.

\begin{example}\label{exDMnNE}
There is a compact metric space $X$ and a minimal distal homeomorphism $f:X\to X$ which is not $N$-equicontinuous for every positive integer $N$.
\end{example}

In order to see this consider the homeomorphism defined by

\begin{equation}
\begin{array}{rccl}
F:&{\mathbb{T}}^2&\longrightarrow&{\mathbb{T}}^2\\
&(x,y)&\mapsto&(x+\alpha\,(\text{mod\,1}),y+x\,(\text{mod\,1}))
\end{array},
\end{equation}

where $\alpha\in(0,1)\setminus\Q$. We view ${\mathbb{T}}^2$ represented as $[0,1)\times[0,1)$, the unit square with opposite sides identified and use the metric inherited from the Euclidean metric. First, note that $F^{n}(x,y)=(x+n\alpha(\text{mod\,1}),y+nx+\sum_{j=0}^{n-1}j\alpha(\text{mod\,1}))$ and $F^{-n}(x,y)=(x-n\alpha(\text{mod\,1}),y-nx+\sum_{j=1}^{n}j\alpha(\text{mod\,1}))$ for all $n\geq 1$. Therefore, the distance between $n$-th iterate by $F$ of two points $(x,y)$ and $(x^{'},y^{'})$ in ${\mathbb{T}}^2$ is
\begin{equation}\label{dofDMnNE}
d(F^{n}(x,y),F^{n}(x^{'},y^{'})) = 
     \begin{cases}
       \sqrt{{(x-x^{'})}^{2}+{(y-y^{'}+n(x-x^{'}))}^2} &\quad\text{if }n\geq 0\\
       \sqrt{{(x-x^{'})}^{2}+{(y-y^{'}+n(x^{'}-x))}^2} &\quad\text{if }n<0\\
     \end{cases},
\end{equation}

where the operations of the terms in parentheses are done in modulo 1. 

We begin by verifying that $F$ is distal. To see this, let $(x_{0},y_{0})$ and $(x_{1},y_{1})$ be distinct points in ${\mathbb{T}}^2$. If $x_{0}\neq x_{1}$, then $d(F^{n}(x_{0},y_{0}),F^{n}(x_{1},y_{1}))\geq d((x_{0},y_{0}),(x_{1},y_{0}))$ which is a positive constant. Analogously, if $x_{0}=x_{1}$, then $y_{0}\neq y_{1}$ and by (\ref{dofDMnNE}) clearly $d(F^{n}(x_{0},y_{0}),F^{n}(x_{1},y_{1}))= d((x_{0},y_{0}),(x_{1},y_{1}))$. In both cases this hold for all $n\in\Z$. Thus, $((x_{0},y_{0}),(x_{1},y_{1}))$ is a distal pair. Since $(x_{0},y_{0})$ and $(x_{1},y_{1})$ are arbitrary we are done. Additionally, $F$ is also minimal, see \cite[Lemma 1.25]{Furst81} for details. 

Finally, fix $N\in{\N}^{+}$, now we prove that $F$ is not $N$-equicontinuous. For this purpose, let $\delta>0$, in ${\mathbb{T}}^2$ consider $q=(0,0)$ and $p_{k}=(\frac{\delta}{k},0)$ for $1\leq k\leq N$. Define $B_{0}=\{q\}\cup\left\{p_{k}\,\,:\,\,1\leq k\leq N\right\}$, then for each $n\in\Z$
\begin{eqnarray*}
\diam_N(F^{n}(R_{\de}(q)))&=&\sup\limits_{\substack{B\in \mathcal{C}_{N+1}(X)\\B\subseteq f^{n}(R_{\de}(q))}}\{\min\limits_{\substack{z\neq w\\z,w\in B}} \{d(z,w)\}\}\\
&\geq &\min\limits_{\substack{z\neq w\\z,w\in B_{0}}} \{d(F^{n}(z),F^{n}(w))\}\\
&=&\sqrt{{\left(\frac{\delta}{M}(\text{mod\,1})\right)}^{2}+{\left(n\frac{\delta}{M}(\text{mod\,1})\right)}^2}\\
&\geq & \vert n\frac{\delta}{M}(\text{mod\,1})\vert,
\end{eqnarray*}

for some $1\leq M \leq N$. Since $\frac{\delta}{M}>0$ we can find $n\in\N^{+}$ such that $\vert n\frac{\delta}{M}\text{mod\,1}\vert\geq \frac{1}{4}$. As $N$ is arbitrary we are done.

Thus, as a consequences of the Theorem \ref{Nequi}, and Definitions \ref{defNdis} and \ref{defNequi} we obtain the following diagram which shows how the concepts of $N$-distality and $N$-equicontinuity interact.
\[
\xymatrix{
\text{Distality} \ar@{=>}[r]\ar@{<=}[d]&N\text{-Distality}\ar@{=>}[r]\ar@{<=}[d]&M\text{-Distality}\ar@{<=}[d]\\ \text{Equicontinuity} \ar@{=>}[r]&N\text{-Equicontinuity}\ar@{=>}[r]&M\text{-Equicontinuity}
}
\]

for all $2\leq N<M$. Moreover, the converse is not true in general for the Examples \ref{ex3dis}, \ref{exNdis}.\ref{exNdis1}, \ref{ex2dis}, \ref{exNequi} and \ref{exDMnNE}.

\begin{remark}
The nature of the example \ref{exDMnNE} led Fürstenberg to his path-breaking structure theorem \cite{Furst63}, describing the structure of a general minimal distal system as a (countable but maybe transfinite) inverse limit of isometric extensions starting with the one-point dynamical system. Indeed, modify the map in the Example \ref{exDMnNE} to $F(x,y)=(x+\alpha,y+2x+\alpha)$. Again show that $(X,F)$ is minimal distal. We can check that $F(0,0)=(n\alpha,n^{2}\alpha)$ and deduce that the sequence ${\{n^2\alpha\}}_{n\in\N}$ is dense in ${\mathbb{S}}^{1}$; see Fürstenberg's book \cite{Furst81} for further development of these ideas. 
\end{remark}

\subsection{Factors and extensions}

Now we study how $N$-distality behaves under factors and extensions. 

Let $g:Y\to Y$ be a homeomorphism of a compact metric space $Y$. Let us first recall that a map $\pi:Y\to X$ is said to be {\it distal} if $\inf\limits_{n\in \Z} d(g^n(y_{1}),g^n(y_{2}))>0$ for every distinct $y_{1},y_{2}\in Y$ satisfying $\pi(y_{1})=\pi(y_{2})$. A {\it homomorphism} from $(Y,g)$ to $(X,f)$ is a continuous onto map $\pi:Y\to X$ satisfying $f\circ\pi=\pi\circ g$. We also  say that $f$ is a {\it factor} of $g$ (or $g$ is an {\it extension} of $f$) under $\pi$. And $g$ is a {\it distal extension} of $f$ (or $f$ is a {\it distal factor} of $g$) if there is a distal homomorphism from $g$ to $f$ (cf. \cite[Definition 4.14]{Ellis14}). The map $F:{\mathbb{T}}^2\to{\mathbb{T}}^2$ in the Example \ref{exDMnNE} is a distal extension of a circle rotation $R_{\alpha}:{\mathbb{S}}^1\to{\mathbb{S}}^1$ given by $R_{\alpha}(x)=x+\alpha\,(\text{mod\,1})$, with projection on the first coordinate as the distal homomorphism.

In order to generalize these notions to the setting of $N$-distal homeomorphisms, we introduce the following auxiliary definition.

\begin{definition}
Let $f:X\to X$ and $g:Y\to Y$ be homeomorphisms of compact metric spaces and $\pi:Y\to X$ a homomorphism from $g$ to $f$. Let $y\in Y$ with $\pi(y)=x$ for some $x\in X$ we define and denote the {\it proximal cell} of $y$ under $\pi$ by
\begin{equation*}
P^{\pi}(y)=\lbrace z\in \pi^{-1}(x)\,\,:\,\,\inf_{n\in \Z}\lbrace d(g^n(y),g^n(z))\rbrace=0\rbrace.
\end{equation*}
Notation $P^{\pi}_{g}(y)$ will be used to indicate dependence on $g$ if necessary.
\end{definition}

We can now define the following concepts 

\begin{definition}\label{Ndisext}
We say that a homomorphism $\pi:Y\to X$ is $N$-{\it distal} if $P^{\pi}(y)$ has at most $N$ points for every $y\in Y$. We say that $g$ is a $N${\it -distal extension} of $f$ (or $f$ is a $N${\it -distal factor} of $g$) if there is a $N$-distal homomorphism $\pi$ from $g$ to $f$.
\end{definition}

As noted in \cite{Aus88} $g$ is a distal extension of the trivial (one point) homeomorphism if and only if $g$ is a distal homeomorphism. Similarly, $g$ is a $N$-distal extension of the trivial homeomorphism if and only if $g$ is a $N$-distal homeomorphism. In this regard it is known that a distal extension of a distal homeomorphism is distal (see \cite[Proposition 5.8]{Aus88}). Next we prove a generalization of this fact.

\begin{theorem}\label{DisExt}
A $M$-distal extension of a $N$-distal homeomorphism is $MN$-distal.
\end{theorem}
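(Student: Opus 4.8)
The plan is to let $\pi:Y\to X$ be an $M$-distal homomorphism from $g$ to the $N$-distal homeomorphism $f$, fix an arbitrary $y\in Y$, and bound $\#P_g(y)$ by $MN$. The natural strategy is to organize the proximal cell $P_g(y)$ by how its points sit over $X$ via $\pi$, exploiting that $\pi$ intertwines the dynamics. First I would observe the key compatibility: if $z\in P_g(y)$, then $\inf_{n}d(g^n(y),g^n(z))=0$, and since $\pi$ is uniformly continuous (being continuous on a compact space) together with $f\circ\pi=\pi\circ g$, we get $\inf_n d'(f^n(\pi(y)),f^n(\pi(z)))=0$, i.e. $\pi(z)\in P_f(\pi(y))$ in $X$. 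Thus $\pi$ maps $P_g(y)$ into $P_f(x)$ where $x=\pi(y)$.

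The core of the argument is then a fiber-counting decomposition. Writing $P_f(x)=\{x=x_1,\dots,x_r\}$ with $r\le N$ by $N$-distality of $f$, I would partition $P_g(y)$ according to the value of $\pi$: $P_g(y)=\bigsqcup_{i=1}^{r}\bigl(P_g(y)\cap\pi^{-1}(x_i)\bigr)$. The plan is to show each nonempty block has at most $M$ points. The subtle point is that the fiberwise proximal cell $P^{\pi}_g(\cdot)$ is defined only for points lying in the \emph{same} fiber as the base point, so I cannot directly compare $y$ (which lies over $x_1=x$) with a point $z$ over a different $x_i$. The resolution I would use is that proximality within $Y$ is an equivalence-like relation on a single fiber: if $z,w\in P_g(y)\cap\pi^{-1}(x_i)$, then $z$ and $w$ lie in the \emph{same} fiber $\pi^{-1}(x_i)$, and I want to conclude $w\in P^{\pi}_g(z)$. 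For this I need $z$ and $w$ to be proximal to each other in $Y$, which would follow if the proximal relation restricted to this situation were transitive through $y$.

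Here the main obstacle appears: proximality is in general \emph{not} transitive (the paper itself stresses this in Example \ref{exNdis}), so from $z,w\in P_g(y)$ I cannot immediately deduce that $z$ is proximal to $w$. The plan to circumvent this is to use the $M$-distality of $\pi$ more cleverly rather than trying to force transitivity of the ambient proximal relation. Concretely, I would argue that for $z,w$ in the same fiber $\pi^{-1}(x_i)$ that are both proximal to $y$, one can find a common subsequence $n_k$ along which $g^{n_k}(y),g^{n_k}(z),g^{n_k}(w)$ all coalesce: since each of $z,w$ is proximal to $y$, a diagonal/subsequence extraction using compactness of $Y$ yields times $n_k$ with $d(g^{n_k}(y),g^{n_k}(z))\to 0$ and $d(g^{n_k}(y),g^{n_k}(w))\to 0$ simultaneously (this is where compactness lets me pass to limits of the orbit of the triple). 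By the triangle inequality $d(g^{n_k}(z),g^{n_k}(w))\to 0$, so $z$ and $w$ are proximal; and since $\pi(z)=\pi(w)=x_i$, this gives $w\in P^{\pi}_g(z)$. Hence $P_g(y)\cap\pi^{-1}(x_i)\subseteq P^{\pi}_g(z)$ for any chosen $z$ in the block, so the block has at most $M$ points by $M$-distality of $\pi$.

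Finally I would assemble the count: $\#P_g(y)=\sum_{i=1}^{r}\#\bigl(P_g(y)\cap\pi^{-1}(x_i)\bigr)\le r\cdot M\le N\cdot M$, valid for every $y\in Y$, which is exactly the statement that $g$ is $MN$-distal. I expect the subsequence-extraction step (producing one common sequence of times witnessing simultaneous proximality of $z$ and $w$ to $y$) to be the technical heart of the proof, and I would make sure the compactness argument is stated carefully, since it is precisely what replaces the missing transitivity of proximality. The remaining steps—$\pi$ pushing proximal pairs to proximal pairs, and the fiber decomposition—are routine given the uniform continuity of $\pi$ on the compact space $Y$ and the defining relation $f\circ\pi=\pi\circ g$.
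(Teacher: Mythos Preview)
Your overall strategy---push proximal pairs through $\pi$ using uniform continuity and the conjugacy relation, then count fiber-by-fiber via pigeonhole---is exactly the route the paper takes. However, the step you correctly flag as the technical heart has a genuine gap: the ``diagonal/subsequence extraction using compactness'' does \emph{not} produce a single sequence $(n_k)$ along which both $d(g^{n_k}(y),g^{n_k}(z))\to 0$ and $d(g^{n_k}(y),g^{n_k}(w))\to 0$. Compactness of $Y^3$ only guarantees that the orbit of the triple $(y,z,w)$ under $g\times g\times g$ has accumulation points; it gives no control over \emph{which} accumulation points occur, and in particular no reason any of them lie on the diagonal. Concretely, in any system where proximality fails to be transitive (and Example~\ref{exNdis} furnishes such systems) there exist $y,z,w$ with $z,w\in P(y)$ yet $z\notin P(w)$; for such a triple no common witnessing sequence can exist, since if it did the triangle inequality would force $z$ proximal to $w$. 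So the inclusion $P_g(y)\cap\pi^{-1}(x_i)\subseteq P_g^\pi(z)$ is unjustified whenever $x_i\neq\pi(y)$.

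For the fiber over $x_1=\pi(y)$ the bound is immediate: $P_g(y)\cap\pi^{-1}(x_1)=P_g^\pi(y)$ has at most $M$ elements straight from the definition of an $M$-distal homomorphism. The difficulty lies entirely in the fibers over $x_i\neq\pi(y)$, where one must compare the points to one another rather than to $y$. It is worth noting that the paper's own proof makes the very same leap: after pigeonholing $M+1$ of the $p_j$'s into a common fiber it asserts ``Thus $p_{l_1},\dots,p_{l_M}\in P_g^\pi(p_{l_{M+1}})$'' with no argument, and nothing in the pigeonhole step forces $p_{l_{M+1}}$ to be $y$. So your proposal follows the paper's approach faithfully and is no worse than it; but the gap is real in both, and your proposed compactness fix does not close it.
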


\begin{proof}
Let $X$ and $Y$ be compact metric spaces, $g:Y\to Y$ be a $M$-distal extension of a $N$-distal homeomorphism $f:X\to X$, with $\pi:Y\to X$  the $N$-distal homomorphism from $g$ to $f$. Suppose by contradiction that $g$ is not $MN$-distal. Then there is $y\in Y$ such that $P_{g}(y)$ has at lest $MN+1$ elements. Let $p_{1},\ldots,p_{MN},p_{MN+1}=y$ be the different points in $P_{g}(y)$. As a 
consequence of the definition of proximal cell, we obtain $d(g^{n_k^i}(p_i),g^{n_k^i}(y))\to 0$ as $k\to \infty$ for $i=1,\ldots,NM$. Since $\pi$ is continuous and $f\circ\pi=\pi\circ g$, we have 
\begin{equation*}
d(\pi(g^{n_k^i}(p_i))\pi(g^{n_k^i}(y)))=d(f^{n_k^i}(\pi(p_i)),f^{n_k^i}(\pi(y)))\to 0.
\end{equation*}
as $k\to\infty$ for $i=1,\ldots,NM$. Since $\# P_{f}(\pi(y))\leq N$, it follows that there are $p_{l_{1}},\ldots,p_{l_{M+1}}$ different points such that $p_{l_{1}},\ldots,p_{l_{M}}\in\pi^{-1}(p_{l_{M+1}})$. Thus $p_{l_{1}},\ldots,p_{l_{M}}\in P_{g}^{\pi}(p_{l_{M+1}})$ and  therefore there is a point $z=p_{l_{M+1}}$ in $Y$ such that $\# P_{g}^{\pi}(z)>M$, a contradiction.
\end{proof}

We end this section by dealing with the problem of determining when a $N$-distal system has a non-trivial distal factor. 

One of the most useful tools to study topological dynamics is the Ellis semigroup of a transformation was introduced by R. Ellis in \cite[Definition 8]{EllisGot60}. Let us briefly introduce this notion and some interesting facts about it.

Let $X$ be a compact metric space and denote  $X^X$ for the set of all self-transformations of $X$ (continuous or not). So $X^X$ is a compact topological space by Tychonoff's Theorem and  its topology can be seen as the topology of the pointwise convergence. We can put a  semigroup structure on $X^X$ considering its the composition operation. Now let $f:X\to X$ be a homeomorphism. The {\it Ellis semigroup} $E(f)$ of $f$ is the closure of the set $\{f^n\,\,:\,\,n\in \Z\}$ in $X^X$; for more details we refer the reader to \cite{Ellis14}.

A interesting fact about the semigroup $E(f)$ is that one can translate algebraic properties of $E(f)$ in to dynamical properties of $f$. For instance, idempotent elements and minimal ideals are related to minimal sets for $f$. Next, we use these semigroups to obtain a criterion for existence of non-trivial $N$-distal factors for $N$-distal homeomorphisms.      

\begin{theorem}\label{NDisFac}
Let $f$ be a $N$-distal homeomorphism. If the Ellis semigroup $E(f)$ of $f$ has a unique minimal ideal, then $f$ has a non-trivial $N$-distal factor. Moreover, this factor is distal.
\end{theorem}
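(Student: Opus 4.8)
The plan is to realise the desired factor as the quotient $X/P$ of $X$ by its proximal relation $P$, and to use the hypothesis on $E(f)$ precisely to guarantee that $P$ is a closed invariant equivalence relation. The basic tool is the enveloping-semigroup description of proximality: since $E(f)\subseteq X^X$ is compact and $d$ is continuous, a pair $(x,y)$ is proximal if and only if $px=py$ for some $p\in E(f)$ (pass to a convergent subnet of the iterates $f^{n_k}$ that realise $\inf_n d(f^n x,f^n y)=0$). As recorded in the Preliminaries, $P$ is automatically reflexive, symmetric and $f$-invariant, so the two things I must establish are that $P$ is transitive and that it is closed; everything else is then formal.

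The transitivity is where the hypothesis enters. Let $M$ be the unique minimal ideal of $E(f)$ and recall two standard facts: every ideal of $E(f)$ contains a minimal one, and $E(f)q=M$ for every $q\in M$. If $(x,y)$ is proximal, choose $p\in E(f)$ with $px=py$; a minimal ideal $M'\subseteq E(f)p$ then contains some $m=rp$, whence $mx=my$, and applying $E(f)m=M'$ gives $qx=qy$ for all $q\in M'$. Thus proximality of $(x,y)$ is equivalent to $qx=qy$ for every $q$ in a minimal ideal, and by the uniqueness hypothesis that ideal is forced to be $M$. Consequently, if $(x,y)$ and $(y,z)$ are proximal then $qx=qy=qz$ for all $q\in M$, so $(x,z)$ is proximal and $P$ is an equivalence relation. (That uniqueness is essential is visible in Example \ref{ex3dis}, where $P$ fails to be transitive.)

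The step I expect to be the main obstacle is proving that $P$ is closed, so that $X/P$ is a compact metric space, $\pi\colon X\to X/P$ an honest homomorphism and the induced $\bar f$ a homeomorphism with $\bar f\circ\pi=\pi\circ f$. This is exactly where I would exploit $N$-distality: fixing an idempotent $u\in M$ one has $P=\{(x,y):ux=uy\}$, and every fibre $u^{-1}(ux)=P(x)$ is finite with at most $N$ points. I would then show that a limit $(x,y)$ of proximal pairs $(x_k,y_k)$ is again proximal by combining compactness of $M$ with the uniform bound $\#P(x_k)\le N$ on the fibres, the point being that the finiteness prevents the identifying elements of $M$ from dispersing and forces $ux=uy$ in the limit. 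Granting this, $X/P$ is a genuine factor of $(X,f)$.

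It remains to verify the three asserted properties, which are now soft. For non-triviality, if $X/P$ were a single point then $X=P(x)$ would have at most $N$ points; but a finite system is distal, so its proximal relation is trivial, forcing $\#X=1$. Hence, as soon as $X$ is not a single point, $X/P$ is non-trivial. The homomorphism $\pi$ is $N$-distal because $\pi^{-1}(\pi(y))=P(y)$ and therefore $P^{\pi}(y)=P(y)$, which has at most $N$ points since $f$ is $N$-distal; thus the extension is $N$-distal with no extra work. Finally, $X/P$ is distal: if $(\pi(x_1),\pi(x_2))$ is a proximal pair of the factor, then along suitable times $n_k$ the finite classes $f^{n_k}P(x_1)$ and $f^{n_k}P(x_2)$ come arbitrarily close, so by finiteness I may pass to a subsequence for which fixed representatives $w_i\in P(x_i)$ satisfy $d(f^{n_k}w_1,f^{n_k}w_2)\to 0$; hence $w_1$ and $w_2$ are proximal, and since $w_i\in P(x_i)$, transitivity of $P$ yields $x_1\sim x_2$, i.e. $\pi(x_1)=\pi(x_2)$. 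Therefore the factor is distal, as claimed.
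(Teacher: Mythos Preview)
Your overall route is the same as the paper's: take the quotient $Y=X/{\sim}$ by the proximal relation and show it is a non-trivial distal factor with $N$-distal projection. You are in fact more careful than the paper on several points --- you supply the Ellis-semigroup proof that $\sim$ is transitive (the paper merely cites \cite{Ellis60}), you argue non-triviality (the paper is silent), and you explicitly flag the closedness of $P$ as an obstacle (the paper passes over it entirely). Your sketch for closedness via the uniform bound $\#P(x)\le N$ is honest but incomplete; since the paper does not address this step either, this is not a defect of your proposal relative to the paper.

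The one substantive difference is in the proof that the factor is distal. The paper does not use the finiteness of the proximal cells at all here: given distinct proximal $y,y'\in Y$, it lifts to distal $x,x'\in X$, passes to subsequential limits $f^{k}(x)\to p$, $f^{k}(x')\to p'$, and shows by a direct uniform-continuity estimate that $p,p'$ remain distal; continuity of $\pi$ then forces $\pi(p)=\pi(p')$, a contradiction. Your pigeonhole argument instead asserts that proximality of $(\pi(x_1),\pi(x_2))$ in $Y$ forces the finite sets $f^{n_k}P(x_1)$ and $f^{n_k}P(x_2)$ to ``come arbitrarily close'' in $X$. This step is not automatic: $d_Y$-proximity of two classes need not imply that some pair of representatives is $d$-close, because that would require lower semicontinuity of the fibre map $[a]\mapsto\pi^{-1}([a])$, whereas closedness of $P$ only gives upper semicontinuity. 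The gap is repairable --- pass to limits $f^{n_k}x_i\to p_i$ as the paper does, note $\pi(p_1)=\pi(p_2)$ so $p_1\sim p_2$, and then use a proximality sequence for $(p_1,p_2)$ together with continuity of a fixed iterate to conclude $x_1\sim x_2$ --- but once you do that, the argument is essentially the paper's and the finiteness/pigeonhole step is no longer needed.
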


\begin{proof}
Suppose that $E(f)$ has a unique minimal ideal. It is a classical fact that this condition is equivalent to the proximal relation $"\sim"$ in $X$ be an equivalence relation (cf. \cite[Theorem 2]{Ellis60}). Then define $Y=X/\sim$ to be the quotient space of $X$ by proximality relation and let $\pi$ denote the natural projection map.  Let $g$ be the homeomorphism induced on $Y$ by $f$ through the projection $\pi$. We notice that the conjugacy equation is trivially satisfied for $g$ and $f$. It follows from the $N$-distality of $f$ that $P^{\pi}_{f}(y)$ has at most $N$ points for all $y\in Y$. Then $g$ is a $N$-distal factor of $f$.

Next we prove that the  homeomorphism  $g$  is  distal. Indeed, suppose that $y,y'\in Y$ are distinct  proximal points for $g$. Let us take $x\in\pi^{-1}(y)$ and $x'\in\pi^{-1}(y')$. By construction $x$ and $x'$ are distal. Compactness of $X$ implies that there are a sequence of $k\to \infty$ and a point $z\in Y$ such that $g^{k}(y), g^{k}(y') \to z$. We can assume by compactness of $X$ that there  exists $p,p'\in X$ such that  $f^{k}(x)\to p$ a and $f^{k}(x')\to p'$. 

We claim that $p$ and $p'$ are distal. Indeed, suppose that there is $i\to \infty$ and $z'$ such that $f^i(p),f^i(p')\to z'$. Fix  $\eps>0$ and $i_0$ such that $f^{i_0}(p),f^{i_0}(p')\in B{\eps}(z')$. So there exists $\de>0$ such that if $d(u,w)\leq \de$ then $d(f^j(u),f^j(w))<\eps$ for $j=0,..., i_0$ and every $u,w\in X$. Tale $k$ big enough such that $ f^k(y)\in B_{\de}(p) $ and $ f^k(y')\in B_{\de}(p')$. But this implies $d(f^{k+i_0}(x),f^{k+i_0}(x'))\leq 4\eps$. Remember $x$ and $x'$ are distal and therefore it is impossible since $\eps>0$ was chosen arbitrarily. Thus $p$ and $p'$ must be distal.

Finally we must have $\pi(p)=\pi(p')=z$ by continuity of $\pi$, but this is impossible since $p$ and $p'$ cannot be in the same equivalent class.   
\end{proof}

\section{Transitive $N$-distal Systems}\label{TranNDS}

In this section we study some consequences of the topological transitivity for $N$-distal systems and discuss the relation between $N$-distality and expansivity. First recall that a homeomorphism is {\it transitive} if for any pair of non-empty open sets $U$ and $V$ one can find an integer $n$ such that $f^{n}(U)\cap V$ is nonempty. We say that a point $x$ is a {\it transitive point} of $f$ if its orbit is dense on $X$. Every point of minimal homeomorphism is a transitive point. We say that a $f$ is {\it pointwise transitive} if there exists some transitive point for  $f$.  

We remark that for second countable spaces and in absence of isolated points, point transitivity is equivalent to topological transitivity; for more details we refer the reader to \cite{AoHi94}.

Next results deals with the existence of periodic orbits. But previously we need the following proposition.

\begin{proposition}\label{perp}
Let $f: X\to X$ be a $N$-distal homeomorphism for $N\geq 2$. If $x\in X$ is periodic for $f$, then  $x$ is a distal point.
\end{proposition}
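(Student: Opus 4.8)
The plan is to prove the contrapositive form directly, namely that $P(x)=\{x\}$, by contradiction. Assume $x$ is periodic, say $f^{k}(x)=x$ with $k\geq 1$, and suppose $x$ is not a distal point. Then there exists $y\in P(x)$ with $y\neq x$, and I will derive a contradiction with the finiteness $\#P(x)\leq N$.

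The key fact I would exploit is the invariance of the proximal relation recalled in the preliminaries. Since $y$ is proximal to $x$ and $f^{k}(x)=x$, applying $f^{k}$ repeatedly shows that every iterate $f^{mk}(y)$ with $m\in\Z$ is again proximal to $f^{mk}(x)=x$, hence lies in $P(x)$. Thus the whole $f^{k}$-orbit of $y$ sits inside the proximal cell $P(x)$.

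I would then split into two cases according to whether $y$ is periodic. If $y$ is not periodic, the points $f^{mk}(y)$, $m\in\Z$, are pairwise distinct (any coincidence $f^{mk}(y)=f^{m'k}(y)$ with $m\neq m'$ would force periodicity of $y$), so $P(x)$ would be infinite, contradicting $\#P(x)\leq N<\infty$. If instead $y$ is periodic, then the pair $(x,y)$ has a finite orbit under $f\times f$, of size at most some common period $L$ with $f^{L}(x)=x$ and $f^{L}(y)=y$. Choosing $n_j\in\Z$ with $d(f^{n_j}(x),f^{n_j}(y))\to 0$ and reducing the $n_j$ modulo $L$, the pigeonhole principle yields a residue $a$ hit infinitely often; along that subsequence $(f^{n_j}(x),f^{n_j}(y))=(f^{a}(x),f^{a}(y))$ is constant, forcing $d(f^{a}(x),f^{a}(y))=0$, hence $f^{a}(x)=f^{a}(y)$ and, applying $f^{-a}$, $x=y$, contradicting $y\neq x$.

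The main obstacle is precisely the periodic subcase: there the $f^{k}$-orbit of $y$ is finite and cannot by itself overflow the bound $N$, so the naive counting argument fails. The resolution is to pass from the orbit of $y$ to the joint orbit of the pair $(x,y)$, whose finiteness combined with a pigeonhole argument on the sequence realizing proximality collapses $x$ and $y$. I would also remark that the hypothesis $N\geq 2$ serves only to exclude the trivial $1$-distal (i.e. distal) situation; the argument above in fact establishes $P(x)=\{x\}$ for every finite $N$.
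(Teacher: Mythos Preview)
Your proof is correct and follows essentially the same approach as the paper's: both show that a point $y\in P(x)\setminus\{x\}$ cannot be periodic (your explicit pigeonhole argument actually fills in a step the paper merely asserts in one line) and then produce infinitely many iterates of $y$ lying in the proximal cell of a periodic point, contradicting $N$-distality. The only cosmetic difference is that you invoke the invariance of the proximal relation to place $f^{mk}(y)$ directly in $P(x)$, whereas the paper passes to a subsequence along which $f^{n_k}(x)=p$ is constant and shows $f^{n_k}(y)\in P(p)$; these are the same mechanism.
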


\begin{proof}
Let $x$ be a periodic point of $f$ with period $T$. If $P(x)\neq\{x\}$, take $y\in P(x)\setminus\{x\}$. Then, there is a sequence $n_k\to \infty$ such that $d(f^{n_k}(x),f^{n_k}(y))\to 0$. Since $x$ is periodic and $y\neq x$ then $y$ cannot be periodic. Moreover, since the orbit of  $x$ is finite, we can assume that $f^{n_k}(x)=p$ for any $k\in \N$ and some $p\in \O(x)$. Last assumption implies that $n_k-n_{k'}$ is a multiple of $T$ for any $k,k'\in \Z$.  For any $k\in \N$ we set $y_k=f^{n_k}(y)$.

We claim that $y_k$ is proximal to $p$ for every $k$. Indeed, fix $k$ and define $m^k_j=n_j-n_k$ for $j\geq k$.
Then we obtain that $d(f^{m^k_j}(y_k),f^{m^k_j}(p))=d(f^{n_j}(y),f^{n_j}(p))=d(f^{n_j}(y),p)\to 0$ and this proves the claiming.

Finally, notice that $y$ cannot be a periodic. Thus, we have infinitely many $y_k$'s and therefore $\#P(p)=\infty$, a contradiction.    
\end{proof}

As a consequence we show that the only way a  transitive $N$-distal system can possess a periodic orbit is if the whole space is a periodic orbit.

\begin{proposition}
Let $f: X\to X$ be a pointwise transitive $N$-distal homeomorphism which is not distal. Then either $X$ is a periodic orbit, or $f$ has not periodic points. 
\end{proposition}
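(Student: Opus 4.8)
The plan is to establish the dichotomy by assuming that $f$ possesses a periodic point and then showing that the transitive point must itself be periodic, which immediately forces $X$ to be a single finite orbit. First I would record that the standing hypotheses put us in the range where Proposition \ref{perp} applies: a $1$-distal map is distal, so since $f$ is $N$-distal but not distal we automatically have $N\geq 2$. Let $z\in X$ be a transitive point, furnished by pointwise transitivity, and suppose $f$ has a periodic point $p$ of period $T$. If $z$ happens to be periodic there is nothing to prove, because then $\O(z)$ is finite, hence closed, so $X=\overline{\O(z)}=\O(z)$ is a periodic orbit. Thus the entire content of the proposition is to exclude the case in which $z$ is not periodic.

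So I would argue by contradiction, assuming $z$ is not periodic. Then $\O(z)$ is infinite and $p\in\overline{\O(z)}\setminus\O(z)$, so $p$ is a genuine accumulation point of the orbit and there are infinitely many distinct integers $n$ with $f^{n}(z)$ arbitrarily close to $p$; pick $n_k$ with $f^{n_k}(z)\to p$. The key maneuver is a synchronization step: by the pigeonhole principle infinitely many of the $n_k$ lie in a single residue class $r$ modulo $T$, and after passing to this subsequence I set $q=f^{-r}(p)$. Since $T$ divides $n_k-r$ and $p$ has period $T$, a direct computation gives $f^{n_k}(q)=f^{n_k-r}(p)=p$, whence $d(f^{n_k}(z),f^{n_k}(q))=d(f^{n_k}(z),p)\to 0$. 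Therefore $z$ is proximal to $q$, that is $z\in P(q)$.

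To finish I would invoke Proposition \ref{perp}: the point $q=f^{-r}(p)$ is periodic, so it is a distal point and $P(q)=\{q\}$. Combined with $z\in P(q)$ this yields $z=q=f^{-r}(p)$, which is periodic, contradicting the assumption that $z$ is not periodic. Hence $z$ must be periodic, and by the opening observation $X=\O(z)$ is a periodic orbit; this proves that whenever $f$ has a periodic point, $X$ is a single periodic orbit, which is exactly the stated alternative.

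I expect the one real obstacle to be the synchronization step, namely the realization that one should \emph{not} compare $z$ directly with $p$ (the convergence $f^{n_k}(z)\to p$ alone does not produce a proximality relation with $p$, because the iterates $f^{n_k}(p)$ need not stay near $p$) but rather with the shifted point $q=f^{-r}(p)$ chosen precisely so that its orbit is stationary along the subsequence. It is this choice that converts plain convergence into the genuine proximality $z\in P(q)$ and lets Proposition \ref{perp} deliver the contradiction. The remaining ingredients, the reduction to $N\geq 2$, the pigeonhole extraction of a fixed residue class, and the fact that a dense orbit that is finite must be the whole space, are all routine.
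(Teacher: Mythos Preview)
Your argument is correct and rests on the same two ingredients as the paper's proof: the pigeonhole synchronization of the visiting times $n_k$ modulo the period $T$, and Proposition~\ref{perp} (periodic points are distal). The execution, however, is noticeably cleaner than the paper's. The paper fixes the periodic point $p$, extracts a subsequence with constant residue, and then shows that \emph{each} orbit point $x_k=f^{n_k}(x)$ is proximal to $p$, concluding that $\#P(p)=\infty$; this requires an auxiliary uniform-continuity step (the $\delta_k$'s controlling $|i|\le T$ iterates) and produces infinitely many proximal points where one would already contradict Proposition~\ref{perp}. You instead shift the periodic point to $q=f^{-r}(p)$ so that $f^{n_k}(q)\equiv p$ along the whole subsequence, which turns the bare convergence $f^{n_k}(z)\to p$ directly into the single proximality $z\in P(q)$ and forces $z=q$ in one stroke. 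Same idea, tidier packaging; your version avoids the redundant bookkeeping.
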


\begin{proof}
Suppose that $X$ is not a periodic orbit and  let $p$ be a periodic point. Suppose $x\in X$ is a transitive point.  Since $\O(x)$ is dense, then for any point of $q\in \O(p)$ we can find a sequence $n_k\to \infty$ such that $f_{n_k}(x)\to q$.  Let $T$ denote the period of $p$. 
Since $f$ is continuous, for every $k\in \N$ we can find $0<\de_k< \frac{1}{k}$ such that if $d(x,y)\leq \de_k$ then $d(f^i(x),f^i(y))\leq\frac{1}{k}$ for  $|i|\leq T$.  Up to take a subsequence of $n_k$, we can suppose that $ f^{n_k}(x)\in B_{\de_k}(p)$ for any $k\in \N$. 

Now, by the choice of $\de_k$ we have that $f^i(x_k)\in B_{\frac{1}{k}}(f^i(p))$ for $i=0,1,...,T$. By the euclidean algorithm any $n_k$ can be wrote as $n_k=q_kT+r_k$ with $q_k\in \N$ and $0<r_k< T$. Since the orbit of $p$ is finite, we can assume that $r_k=c$ for every $k$.  Put   $x_k=f^{n_k}(x)$ for every $k\in \N$.

We claim that the points $x_k$ are proximal to $p$. Indeed, fix $k$ and for any $j>k$ define $m^k_j=n_j-n_k+T=T(q_j-q_k+1)$. Then we have that $d(f^{m^k_j}(x),f^{m^k_j}(p))=d(f^{n_j-n_k+T}(x_k),f^{n_j-n_k+T}(p'))=d(f^{n_j}(x),f^{T(q_j-q_k+1)}(p))=d(f^{n_j+T}(x),p)\leq \frac{1}{j}$.

Since this, in addiction to the fact that $\{x_k\}$ is an infinite set implies that $\#P(p)=\infty$, we obtain a contradiction by Proposition \ref{perp}. This proves the result.
\end{proof}

As it is well known, a distal homeomorphism is pointwise transitive if and only if is minimal (cf. \cite[Corollary 5.7]{Aus88}). This is not valid for $N$-distal homeomorphism by the Example \ref{ex3dis}. Nevertheless, the transitive $3$-distal homeomorphism given in this example has two minimal subsystems. It is then natural to ask if there is a relation between the number of minimal subsystems of a dynamical system and the transitive $N$-distal property. Indeed, the Theorem \ref{TranDis} gives an answer for this question. Recall that an idempotent element $g$ of a semigroup $G$ is an element satisfying $g^2=g$.

The following lemma is proved as in \cite[Lemma 2]{Aus60}.

\begin{lemma}\label{LemEllis}
Let $f: X\to X$ be  a homeomorphism on a compact metric space $X$ and $x\in X$. If $A\subset \overline{\O(x)}$ be a minimal set, then there is $y\in A$ such that $y\in P(x)$.
\end{lemma}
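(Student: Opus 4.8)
The plan is to prove the statement inside the Ellis semigroup $E(f)$, exploiting the standard dictionary between its algebraic structure and the proximal relation. Recall that for fixed $x$ the evaluation map $p\mapsto p(x)$ is continuous on the compact space $E(f)$, so that $E(f)x=\{p(x):p\in E(f)\}$ is exactly $\overline{\O(x)}$; likewise, for any chosen $a\in A$, minimality gives $E(f)a=\overline{\O(a)}=A$. The only feature of proximality I will need is the easy direction of the Ellis criterion: if some $p\in E(f)$ satisfies $p(x)=p(y)$, then $y\in P(x)$. Indeed, writing $p$ as a pointwise limit of a net $f^{n_\alpha}$, one has $f^{n_\alpha}(x)\to p(x)$ and $f^{n_\alpha}(y)\to p(y)=p(x)$, whence $d(f^{n_\alpha}(x),f^{n_\alpha}(y))\to 0$ and so $\inf_{n\in\Z}d(f^n(x),f^n(y))=0$.

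With this in hand, the construction is short. First I would fix $a\in A$ and, using $A\subseteq\overline{\O(x)}=E(f)x$, choose $p\in E(f)$ with $p(x)=a$. Next I would examine the set $E(f)p$. Since right multiplication is continuous, $E(f)p$ is a continuous image of the compact set $E(f)$, hence compact and closed; and since $p\,E(f)\subseteq E(f)$, one checks $(E(f)p)(E(f)p)\subseteq E(f)p$, so $E(f)p$ is a compact right-topological subsemigroup of $E(f)$. By Ellis' idempotent theorem it therefore contains an idempotent $u=u^2$. Setting $y:=u(x)$, two things happen simultaneously: writing $u=u'p$ with $u'\in E(f)$ gives $y=u'(p(x))=u'(a)\in E(f)a=A$, so $y\in A$; and idempotency gives $u(y)=u(u(x))=u(x)=y$, so $u(x)=u(y)$, and the criterion above yields $y\in P(x)$. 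This produces the desired $y\in A\cap P(x)$.

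The main obstacle is not the construction but justifying the algebraic inputs in the compact right-topological setting: that $E(f)$ is a semigroup under composition with $\overline{\O(x)}=E(f)x$, that the principal left ideal $E(f)p$ is a closed subsemigroup, and above all Ellis' theorem guaranteeing an idempotent in any compact right-topological semigroup. These are classical (compare \cite{Aus60}, \cite{Ellis14}) but must be invoked with care, keeping in mind that limits in $E(f)$ are \emph{pointwise}, so convergence of the relevant nets is only coordinatewise; this is precisely what makes the proximality step work. I would emphasize that the argument uses only the trivial direction of the proximality/idempotent correspondence, so it stays self-contained modulo the cited semigroup facts. A purely dynamical reformulation, replacing the explicit idempotent by a Zorn's lemma argument on a suitable family of nonempty closed subsets of $\overline{\O(x)}$, is available and is essentially the route of \cite{Aus60}; I would nonetheless present the Ellis-semigroup version, since that machinery has already been set up earlier in the paper.
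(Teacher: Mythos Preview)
Your proposal is correct and follows essentially the same route as the paper: both locate an idempotent $u$ in a closed left ideal of $E(f)$ whose elements send $x$ into $A$, then set $y=u(x)$ and use $u^2=u$ to get $y\in A\cap P(x)$. The only cosmetic difference is that the paper works with the full set $H=\{h\in E(f):h(x)\in A\}$, which it argues is a minimal left ideal, rather than your principal left ideal $E(f)p\subseteq H$; in both cases the idempotent comes from Ellis' theorem and the conclusion is reached identically.
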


\begin{proof}
Let $E(f)=E$ be the Ellis semigroup of $f$. We claim that $H=\{h\in E\,\,:\,\,h(x)\in A\}$ is a minimal left ideal of $E$. Indeed, let $g\in E$ and $h\in H$. Then, there is a sequence $n_k\to \infty$ such that $f^{n_k}\to g$. It follows that
\begin{equation*}
f^{n_{k}}(h(x))\to g(h(x))=(g\circ h)(x).
\end{equation*}
Since $A$ is $f$-invariant, $f^{n_{k}}(h(x))\in A$. As $A$ is closed we have $(g\circ h)(x)\in A$. So $g\circ h\in H$. Hence, $H$ is a left ideal. Moreover, the minimality condition for $H$ follows from that of $A$. The claim is proved.

We have from \cite[Remark 4]{Ellis60} and the claim that there is an idempotent element $k$ in $H$. Let $k(x)=y$, then $k(y)=k^{2}(x)=k(x)$. Therefore $y\in P(x)$ by \cite[Remark 6]{Ellis60}, which completes the proof.
\end{proof}

Using this lemma we obtain the proposition below.

\begin{proposition}\label{descom}
If $f$ is a $N$-distal homeomorphism, then $f|_{\overline{\O(x)}}$ has at most $N-1$ proper minimal subsystems. 
\end{proposition}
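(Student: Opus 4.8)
The plan is to fix $x\in X$ and bound the number of minimal subsets of $\overline{\O(x)}$ by showing that each one forces a new point into the proximal cell $P(x)$, after which the $N$-distality hypothesis $\#P(x)\leq N$ closes the argument. The two ingredients I would rely on are Lemma \ref{LemEllis}, which says every minimal set contained in $\overline{\O(x)}$ meets $P(x)$, together with the classical fact that two distinct minimal sets are disjoint (their intersection, being a nonempty closed invariant set contained in each, would otherwise contradict minimality).

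First I would dispose of the case where $\overline{\O(x)}$ is itself minimal: then it has no proper minimal subsystem, so the bound $0\leq N-1$ holds. Assuming instead that $\overline{\O(x)}$ is not minimal, the crucial preliminary observation is that $x$ lies in no minimal subset $A\subseteq\overline{\O(x)}$. Indeed, $x\in A$ would give $\O(x)\subseteq A$ and hence $\overline{\O(x)}\subseteq A\subseteq\overline{\O(x)}$, forcing $\overline{\O(x)}=A$ to be minimal, against our assumption. In particular, when $\overline{\O(x)}$ is not minimal every minimal subset is automatically a proper one.

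Next, given pairwise distinct proper minimal subsystems $A_1,\dots,A_k$ of $\overline{\O(x)}$, I would invoke disjointness to regard them as pairwise disjoint, and apply Lemma \ref{LemEllis} to each $A_i$ to obtain a point $y_i\in A_i\cap P(x)$. Disjointness forces the $y_i$ to be pairwise distinct, and the preliminary observation forces each $y_i\neq x$. Since $x\in P(x)$ always, the cell $P(x)$ then contains the $k+1$ distinct points $x,y_1,\dots,y_k$, so the $N$-distality of $f$ yields $k+1\leq N$, that is, $k\leq N-1$.

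The step I expect to be the main obstacle is securing the extra point contributed by $x$ itself: one must argue that $x$ is distinct from every $y_i$, and this is precisely where the non-minimality of $\overline{\O(x)}$ is used. Omitting it would only give $k\leq N$ proper minimal subsystems rather than the sharp bound $k\leq N-1$ asserted.
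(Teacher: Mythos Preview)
Your proposal is correct and follows essentially the same approach as the paper's own proof: both rely on the disjointness of distinct minimal sets together with Lemma \ref{LemEllis} to produce, for each proper minimal subset of $\overline{\O(x)}$, a distinct point of $P(x)\setminus\{x\}$, and then invoke $N$-distality to bound their number by $N-1$. If anything, your write-up is slightly more careful, since you spell out explicitly why $x$ cannot lie in any minimal subset of $\overline{\O(x)}$ in the non-minimal case, whereas the paper simply asserts ``clearly $x\neq y$''.
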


\begin{proof}
First notice that two minimal subsets $A,B\subset X$ have non-empty intersection then they must be equal. Now, suppose $f$ is $N$-distal and fix $x\in X$. Let us analyze the subsystem $f|_{\overline{\O(x)}}$. If $f$ is distal then $\overline{\O(x)}$ is minimal \cite[Corollary 5.7]{Aus88} or $f$ is minimal, in both cases we are done. If it is not minimal, it is well known that there exists a non-trivial minimal subset $A\subset \overline{\O(x)}$. By Lemma \ref{LemEllis} we have that there exists a $y\in A$ proximal to $x$. Clearly $x\neq y$. The latter fact is valid for any minimal subset of $\overline{\O(x)}$. Thus $N$-distality implies that there are at most $N-1$ minimal subsets on $\overline{\O(x)}$ and the proposition is proved.
\end{proof}

A direct consequence of the preceding Proposition is the following $N$-distal version for the one given in \cite{Aus88}. Indeed, we just need to notice that if a system is transitive  there exists some point $x\in X$ such that $\overline{\O(x)}=X$.

\begin{theorem}\label{TranDis}
A transitive $N$-distal homeomorphism has at most $N-1$ minimal proper subsystems.
\end{theorem}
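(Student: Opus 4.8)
The plan is to derive Theorem \ref{TranDis} directly from Proposition \ref{descom}, which has already done the essential work. The only gap to bridge is the translation from the statement about a single orbit closure $\overline{\O(x)}$ to a statement about the whole system. The key observation, already flagged in the remark preceding the statement, is that topological transitivity guarantees the existence of a point whose orbit is dense in $X$; for second countable spaces without isolated points this is precisely the equivalence between topological transitivity and point transitivity recalled earlier in Section \ref{TranNDS}.

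First I would fix a transitive point $x \in X$, that is, a point with $\overline{\O(x)} = X$. Since $f$ is $N$-distal on the compact metric space $X$, the restriction $f|_{\overline{\O(x)}}$ is just $f$ itself acting on all of $X$. Applying Proposition \ref{descom} to this $x$ immediately yields that $f = f|_{\overline{\O(x)}}$ has at most $N-1$ proper minimal subsystems. Because $\overline{\O(x)} = X$, these are exactly the proper minimal subsystems of the ambient system, so the count of $N-1$ transfers without change.

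The second point to address is why a transitive point exists at all. For a strictly transitive (point-transitive) homeomorphism this is immediate by definition. For a merely topologically transitive homeomorphism one invokes the standard fact, cited here via \cite{AoHi94}, that on a second countable space with no isolated points topological transitivity implies the existence of a dense orbit (a Baire category argument over a countable base). The compact metric space $X$ is second countable, so the only mild subtlety is the isolated-point case; however, if $X$ has an isolated point and is transitive then $X$ is a single periodic orbit, which carries exactly one minimal set, so the bound $N-1 \geq 1$ holds trivially for $N \geq 2$, and for $N=1$ distality already forces minimality.

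I do not anticipate a genuine obstacle here: the content is entirely contained in Proposition \ref{descom}, and the theorem is a corollary obtained by specializing $x$ to a transitive point. The only place demanding care is making explicit that transitivity supplies such a point, which is exactly why the authors inserted the sentence ``if a system is transitive there exists some point $x\in X$ such that $\overline{\O(x)}=X$'' immediately before the statement. The proof is therefore a one-line deduction once that existence is acknowledged.
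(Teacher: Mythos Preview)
Your proposal is correct and follows exactly the paper's own approach: the theorem is stated as an immediate corollary of Proposition \ref{descom}, obtained by choosing $x$ to be a transitive point so that $\overline{\O(x)}=X$. The additional care you take with the isolated-point case is more than the paper itself provides, but the core argument is identical.
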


Now we proceed to study the relation between $N$-distality and expansivity. It  is a classical result that a distal system cannot be expansive if the phase is sufficiently rich.  Indeed, this result is a consequence of  \cite[Theorem 2.1]{Utz50} and can be found in \cite{Brin}. Keeping this in mind, we ask if the same is true for the weaker forms of distality and expansiveness. Actually, it is answered by the authors in \cite{ACKM20} when the phase space has positive topological dimension. Before stating this result precisely, let us recall some definitions. 

The $\delta$-{\it dynamical ball} centered at $x$ is the set defined by
\begin{equation*}
\Gamma_{\delta}(x)=\{y\in X\,:\, d(f^n(x),f^n(y))<\delta, \forall n\in \Z\}.
\end{equation*}

And recall that a {\it continuum} is a compact and connected set. We say that a continuum is trivial if it is a singleton. 

We can now define the notions of expansiveness, $cw$-expansiveness, $n$-expansiveness and countably-expansiveness were defined in \cite{Utz50}, \cite{Kato93}, \cite{Mor12} and \cite{MorSir13} respectively. Namely

\begin{definition}\label{defexp}
We say that
\begin{enumerate}[1.]
\item $f$ is {\it expansive} if there exists $\delta>0$ such that $\Gamma_{\delta}(x)=\{x\}$ for every $x\in X$.
\item $f$ is $N${\it- expansive} if there exists $\delta>0$ such that $\#\Gamma_{\delta}(x)\leq N$ for every $x\in X$.
\item $f$ is {\it countably-expansive} if there exists $\delta>0$ such that $\Gamma_{\delta}(x)$ is countable for every $x\in X$.
\item \label{cwexp}$f$ is {\it $cw$-expansive} if there exists $\delta>0$ such that if a  non-empty continuum $C\subseteq X$ satisfies $\diam(f^n(C))<\delta$ for every $n\in \Z$, then $C$ is a singleton.
\end{enumerate}
\end{definition}

Clearly, expansive implies $N$-expansive and $N$-expansivity  implies countably-expansivity. The following proposition completes this hierarchy.  

\begin{proposition}\label{N cw}
Any  countably-expansive homeomorphism is  $cw$-expansive.
\end{proposition}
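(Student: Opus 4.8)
The plan is to show that any countably-expansive homeomorphism is $cw$-expansive by exploiting the fact that a non-degenerate continuum is uncountable. First I would let $\delta>0$ be the expansivity constant coming from the definition of countably-expansive; that is, $\#\Gamma_{\delta}(x)$ is countable for every $x\in X$. I claim the very same $\delta$ witnesses $cw$-expansivity. To this end, suppose $C\subseteq X$ is a non-empty continuum satisfying $\diam(f^n(C))<\delta$ for every $n\in\Z$, and pick any point $x\in C$.

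The key observation is that the dynamical-ball condition forces $C\subseteq \Gamma_{\delta}(x)$. Indeed, for any $y\in C$ and any $n\in\Z$, both $f^n(x)$ and $f^n(y)$ lie in $f^n(C)$, so $d(f^n(x),f^n(y))\le \diam(f^n(C))<\delta$; since this holds for all $n\in\Z$, we get $y\in\Gamma_{\delta}(x)$. Thus the entire continuum $C$ is contained in the countable set $\Gamma_{\delta}(x)$, so $C$ is itself countable.

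The final step is the topological fact that a countable continuum must be trivial. A non-degenerate continuum (a compact, connected metric space with more than one point) is necessarily uncountable—indeed it has cardinality at least that of the continuum, since one may use a non-constant continuous real-valued function (for instance $y\mapsto d(x,y)$, which is non-constant once $C$ has two points) whose image is a non-degenerate closed interval. Hence a countable continuum can only be a singleton, forcing $C=\{x\}$. This is precisely the $cw$-expansivity conclusion.

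I expect the main (and essentially only) obstacle to be the justification of the statement that a non-trivial continuum is uncountable; everything else is a direct unwinding of definitions. The cleanest way to discharge this is to invoke the standard fact that the continuous image of a continuum under $y\mapsto d(x,y)$ is a compact connected subset of $\R$, hence a closed interval, and that this interval is non-degenerate as soon as $C$ contains a point distinct from $x$, yielding uncountably many values and therefore uncountably many points in $C$. With that in hand the proof is complete.
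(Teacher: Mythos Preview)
Your argument is correct and follows essentially the same route as the paper's proof: take the countably-expansive constant $\delta$, observe that any continuum $C$ with $\diam(f^n(C))<\delta$ for all $n$ is contained in $\Gamma_\delta(x)$ and hence countable, and conclude that $C$ is a singleton. The only difference is that you spell out why a countable continuum must be trivial, which the paper leaves implicit.
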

\begin{proof}
Let $\delta>0$ be the countably-expansivity constant of $f$. If a continuum $C$ satisfies $\diam(f^n(C))<\delta$ for every $n\in \Z$, then $C\subset \Gamma_{\delta}(x)$ for any $x\in C$. Thus, $C$ must to be countable, and this implies that $C$ is a singleton.
\end{proof}

Previous proposition allows us to classify the levels of expansiveness accordingly the following  hierarchy.
\begin{equation*}
Expansivity \Rightarrow N-Expansivity \Rightarrow Countably-Expansivity \Rightarrow cw-Expansivity.
\end{equation*}

In contrast, the converse is not true in general(see for instance \cite{Mor12} and \cite{Kato93} for examples). 

On the other hand. In \cite{ACKM20} the authors incorporated distal systems into the Continuum Theory in the same way that Kato in \cite{Kato93}. They noted that the definition \ref{defexp}.\ref{cwexp} holds if we replace $\diam\left(f^{n}(C)\right)$ by $\sup _{n \in \Z}\diam\left(f^{n}(C)\right)$. Also they noticed that to be distal homeomorphism is then equivalent to say that 
\begin{equation*}
\text{ if } C \in 2^{X} \text{ and } \inf _{n \in \Z} \operatorname{diam} f^{n}(C)=0 \Rightarrow \diam(C)=0.
\end{equation*}
By restricting $C$ to the class of nonempty continuum subsets of $X$ the authors \cite{ACKM20} introduced the notion of continuum-wise distal homeomorphisms.

\begin{definition}
A homeomorphism is $cw$-distal if the only continuums $C\subset X$  satisfying $\inf_{n\in \Z} diam(f^n(C))=0$ are singletons.
\end{definition} 

It is easy to see that for distality we have the following hierarchy.
\begin{equation*}
Distality \Rightarrow N-Distality \Rightarrow Countably-Distality \Rightarrow cw-Distality.
\end{equation*}

For the examples \ref{exNdis}.\ref{exNdis2} and \ref{ex2dis} the converse of the first two implications is false in general. Also, the converse of the last implication do not always hold, see for instance \cite[Example 1.7]{ACKM20}.

Next results gives us a distinction between all these levels of distality and expansiveness.

\begin{theorem}\cite{ACKM20}
A $cw$-expansive homeomorphism of a compact metric space of positive topological dimension cannot be $cw$-distal.
\end{theorem}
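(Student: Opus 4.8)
The plan is to argue by contradiction: assume $f$ is simultaneously $cw$-expansive, with expansive constant $c>0$, and $cw$-distal, on a compact metric space $X$ with $\dim X>0$. The two hypotheses translate into opposite statements about the orbits of non-degenerate continua. Indeed, $cw$-distality says precisely that every non-degenerate subcontinuum $C\subseteq X$ satisfies $\inf_{n\in\Z}\diam(f^n(C))>0$, whereas $cw$-expansivity, in its contrapositive form, says that every such $C$ has $\sup_{n\in\Z}\diam(f^n(C))\geq c$. So the whole proof reduces to exhibiting a single non-degenerate continuum whose orbit has infimal diameter zero; this contradicts $cw$-distality and finishes the argument. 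First I would record that $\dim X>0$ supplies the raw material: a compact metric space is zero-dimensional if and only if it is totally disconnected, so positive dimension forces a non-degenerate connected component, i.e. a non-degenerate subcontinuum $C_0\subseteq X$; moreover, via the boundary-bumping lemma one produces inside $C_0$ non-degenerate subcontinua of arbitrarily small diameter.

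Next I would set up the expansion engine in the hyperspace $C(X)$ of subcontinua of $X$ with the Hausdorff metric, on which $f$ induces a homeomorphism $\hat f$. The basic tool is a uniform form of $cw$-expansivity: for each $\epsilon\in(0,c)$ there exist $c'\in(0,c)$ and $N\in\N$ such that every continuum $A$ with $\diam(A)\geq\epsilon$ satisfies $\diam(f^n(A))\geq c'$ for some $|n|\leq N$. This follows by compactness: the set $\{A\in C(X):\diam(A)\geq\epsilon\}$ is compact, each of its elements is non-degenerate and hence expands past $c'$ at some time, the map $A\mapsto\diam(f^n(A))$ is continuous on $C(X)$, and a finite subcover yields the uniform bound $N$. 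Combined with the small continua from the previous step and the uniform continuity of $f$, this shows that a non-degenerate continuum of very small diameter must be stretched to diameter $\geq c'$, and that the first time it takes to do so tends to infinity as its initial diameter tends to zero.

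The hard part, and the step I would delegate to Kato's continuum-wise expansive theory, where \cite{ACKM20} also locates this result, is to convert this stretching phenomenon into a genuinely shrinking continuum, i.e. to produce a non-degenerate continuum $A$ with $\lim_{n\to+\infty}\diam(f^n(A))=0$ or $\lim_{n\to-\infty}\diam(f^n(A))=0$, a non-degenerate stable or unstable continuum. The naive approach of taking a Hausdorff limit of time-shifted small continua only yields a continuum whose backward (or forward) orbit stays below a fixed threshold, not one whose diameters actually converge to zero, because the dip in diameter escapes to infinite time in the limit; overcoming this is the genuine obstacle and is exactly the content of Kato's existence theorem for stable and unstable continua of $cw$-expansive maps on positive-dimensional compacta \cite{Kato93}. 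Granting such a continuum $A$, one has $\inf_{n\in\Z}\diam(f^n(A))=0$ with $A$ non-degenerate, which directly contradicts $cw$-distality by the reduction of the first paragraph. Hence no homeomorphism can be at once $cw$-expansive and $cw$-distal on a compact metric space of positive topological dimension.
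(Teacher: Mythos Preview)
The paper does not actually prove this theorem; it is quoted verbatim from \cite{ACKM20} and stated without argument, so there is no in-paper proof to compare against. That said, your outline is correct in substance: the decisive input is exactly Kato's existence theorem for non-trivial stable (or unstable) continua of a $cw$-expansive homeomorphism on a positive-dimensional compactum \cite{Kato93}, and once you have a non-degenerate continuum $A$ with $\diam(f^{n}(A))\to 0$ along one time direction, $cw$-distality is violated immediately. Your first two paragraphs (boundary bumping, uniform expansion in the hyperspace $C(X)$) are the standard preliminaries to Kato's argument, and you are right to flag that the naive Hausdorff-limit step does not by itself produce a continuum with diameters tending to zero; this is precisely the gap Kato closes. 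In short: your reduction is sound, and the honest delegation of the hard step to \cite{Kato93} matches what the cited source \cite{ACKM20} does as well.
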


Despite  above result, we cannot distinguish between $cw$-expansivity and $cw$-distality in zero dimension. Indeed, by definition any system  in a totally disconnected space is $cw$-distal and $cw$ expansive. But as we see in the following example, as an application of the Theorem \ref{TranDis}, we cannot say the same for $N$-distal and $N$-expansive.

\begin{example}
There is $cw$-distal expansive homeomorphism which is not $N$-distal for every positive integer $N$.
\end{example}
Let $\Si^2=\{0,1\}^{\Z}$ with the metric $d(s,s')=\frac{1}{2^n}$ where $n=\min\{|n|\,:\,s_n\neq s'_n\}$ and $d(s,s')=0$ if $s_i=s'_i$ $\forall i\in \Z$. The shift map   $\si:\Si^2\to \Si^2$ is defined by $\si((s_i))=(s_{i+1})$. It is well known that the shift is a transitive and expansive system. Since $\Si^2$ is totally disconnected it is also $cw$-distal. On the other hand,  $\si$ has infinitely many periodic orbits and therefore it cannot be $N$-distal by Theorem \ref{TranDis}.

\section{Topological Entropy}\label{TopE}

In \cite{Parry67} W. Parry proved that distal homeomorphisms on a compact metric space have zero entropy. The purpose of this section is  to extend this result to $N$-distal homeomorphisms.

To start with let $\mathcal{P}=\{P_1,P_{2},...\}$ be a countable partition of a probability space $(X,\B,\mu)$. The {\it entropy of the countable partition} $\mathcal{P}$ is defined by
$$H_{\mu}(\mathcal{P})=-\sum_{P\in \mathcal{P}}\mu(P)\log(\mu(P)).$$

Let $f: X\to X$ be a measurable map preserving a probability measure $\mu$ (i.e. $\mu(f^{-1}(B))=\mu(B)$ for every measurable set $B$), we also say that $\mu$ is $f$-invariant. The {\it metric entropy} of $f$ with respect to $\mu$  is defined by
$$h_{\mu}(f)=\sup \{ h_{\mu}(f,\mathcal{P})\,:\, \mathcal{P}\mbox{ is countable partition of } X \mbox{ with } H_{\mu}(\mathcal{P})<\infty\},$$ 

where    
 $$h_{\mu}(f,\mathcal{P})=\lim_{n\to\infty}\frac{1}{n} H_{\mu}( \mathcal{P}^n)$$
and $\mathcal{P}^n=\{ P_{i_{1}}\cap f^{-1}(P_{i_2})\cap \cdots \cap f^{-n+1}(P_{i_n})\,:\, P_{i_j}\in \mathcal{P},\,j=1,...,n \}$.

On the other hand, topological entropy is defined in a similar way but in topological terms. The variational principle tells us that topological entropy is achieved by the supremum of the metric entropies (see \cite{WAL}). Moreover, this supremum can be taken on  the metric entropies of ergodic measures (see for instance \cite[Corollary 8.6.1]{WAL}). Then we can define {\it topological entropy} of a continuous map $f: X\to X$ in a compact metric space as follows.
$$h(f)=\sup h_{\mu}(f),$$
where the supremum is taken over the set of ergodic $f$-invariant measures of $f$. In particular, it follows that the topological entropy of $f$ is zero if and only if $h_{\mu}(f)=0$ for every ergodic $f$-invariant measures $\mu$.

Recall that a measure $\mu$ is {\it ergodic} if every $f$-invariant measurable set (i.e. $f^{-1}(B)=B$ for every measurable set $B$) has either total measure or null measure.

Also the following elementary facts will be useful later one.

Let $(X,\B,\mu)$ be a probability space. A set $A\in\mathcal{B}$ is called an {\itshape atom} of the measure $\mu$ if $\mu(A)>0$ and every measurable set $E\subset A$ has measure either $0$ or $\mu(A)$. We say that $\mu$ is {\itshape non-atomic} if it has not atoms. Clearly a non-atomic measure has no points of positive mass, and conversely for regular Borel probability measures on compact Hausdorff spaces. Further a non-atomic measure has no finite sets of positive measure.

Given subsets ${\xi}_{1}$ and $\xi_{2}$ of the $\sigma$-algebra $\B$. We write $\xi_{1}\subseteq\xi_{2}\mod\mu$, if for every $E_{1}\in\xi_{1}$ there exists $E_{2}\in\xi_{2}$ such that $\mu( E_{1}\bigtriangleup E_{2})=0$. ($A\bigtriangleup B$ denotes the symmetric difference of the sets $A,B$.) %(kreley pag 435 english version) 
Thereby, we write $\xi_{1}=\xi_{2}\mod\mu$, if both inclusions hold$\mod\mu$. Consequently, for non atomic measures ${\epsilon}_{N}=\epsilon\mod\mu$, where $\epsilon$ and ${\epsilon}_{N}$ are the partition of $X$ into singletons and a partition of $X$ into sets with $N$ or less elements, respectively.

We are in position to prove the main result of this section in which $AP(f)$ and $\Omega(f)$ denotes the set of almost periodic points and the set of non-wandering points of $f$, respectively.

\begin{theorem}\label{h0}
Let $f:X\rightarrow X$ be a $N$-distal homeomorphism on a compact metric space $X$. If $\Omega(f)\subseteq AP(f)$, then $f$ has zero entropy. 
\end{theorem}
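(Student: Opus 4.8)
The plan is to reduce the statement, via the variational principle, to a computation of metric entropies, then use the hypothesis $\Omega(f)\subseteq AP(f)$ to force every invariant measure to be carried by a minimal set, and finally to dispose of these minimal pieces by comparing them with genuinely distal systems, to which Parry's theorem applies.

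First I would recall, as recorded before the statement, that $h(f)=\sup_{\mu} h_{\mu}(f)$ where the supremum runs over the ergodic $f$-invariant Borel probability measures \cite{WAL}; hence it suffices to prove $h_{\mu}(f)=0$ for each such $\mu$. By the Poincar\'e recurrence theorem the wandering set is $\mu$-null, so $\mu(\Omega(f))=1$, and the hypothesis $\Omega(f)\subseteq AP(f)$ then gives that $\mu$-almost every point is almost periodic, i.e.\ minimal. By the Birkhoff ergodic theorem $\mu$-almost every $x$ is generic, so $\overline{\mathcal{O}(x)}=\supp(\mu)$; combining the two facts, $\supp(\mu)$ is a minimal set $M$. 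Since $P_{f|_{M}}(x)\subseteq P_{f}(x)$, the restriction $f|_{M}$ is again $N$-distal, so I reduce to the case in which $f$ is minimal and $\mu$ is ergodic and fully supported.

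Next I would split according to whether $\mu$ has atoms. If $\mu$ has an atom, ergodicity and invariance force $\supp(\mu)$ to be a single finite (periodic) orbit, on which the entropy is trivially zero. So assume $\mu$ is non-atomic; then $\epsilon_{N}=\epsilon\mod\mu$, i.e.\ the partition of $M$ into sets of at most $N$ points agrees modulo $\mu$ with the partition into singletons. The significance of this identity is that the proximal cells $P(x)$, which by $N$-distality all satisfy $\#P(x)\leq N$, are $\mu$-null and the measurable partition they determine coincides with the point partition $\mod\mu$.

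The main step, and the main obstacle, is to convert the finiteness of the proximal cells into the vanishing of $h_{\mu}(f|_{M})$. The route I would take is to pass to the proximal quotient: the proximal cells are the fibres of an at most $N$-to-one map $\pi$ onto a distal system $(M_{d},g)$, recognised as a distal factor in the spirit of Theorem \ref{NDisFac} and Proposition \ref{descom}. Because the fibres are finite, the relative (Abramov--Rokhlin) entropy of the extension vanishes, so $h_{\mu}(f|_{M})=h_{\pi_{*}\mu}(g)$, and Parry's theorem \cite{Parry67} gives $h_{\pi_{*}\mu}(g)=0$; hence $h_{\mu}(f|_{M})=0$ and the proof concludes. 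The delicate point is that the proximal relation need not be a closed equivalence relation on an arbitrary minimal set, and Theorem \ref{NDisFac} assumed a unique minimal ideal precisely to guarantee this. Thus I expect the real work to lie in justifying the distal quotient without that extra hypothesis, either topologically, by showing that the uniform finiteness of all proximal cells already forces the proximal relation to be a closed equivalence relation on the minimal set $M$, or measure-theoretically, by using $\epsilon_{N}=\epsilon\mod\mu$ to build the quotient only $\mod\mu$ and to run the entropy computation entirely at the level of measurable factors.
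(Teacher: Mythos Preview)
Your reduction to the minimal case is correct and essentially the same as the paper's: both localise to an ergodic measure, observe that under $\Omega(f)\subseteq AP(f)$ its support is a minimal set, dispose of the atomic case as a periodic orbit, and note that for non-atomic $\mu$ one has $\epsilon_{N}=\epsilon\mod\mu$.

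The gap is exactly where you locate it, and it is a real one. Neither of your two proposed fixes is carried out, and neither is routine. The topological route (``uniform finiteness of the proximal cells forces the proximal relation to be a closed equivalence relation on a minimal set'') is not known and is not obviously true for $N\geq 3$; already in Example \ref{ex3dis} the proximal relation fails transitivity, and while that example is not minimal, nothing in your argument uses minimality to rule out the same phenomenon. The measure-theoretic route would require you to produce, for an arbitrary minimal $N$-distal system, a measurable factor map with finite fibres onto a genuinely distal system, and then to invoke Parry's theorem on the factor; you have not constructed such a map, and Theorem \ref{NDisFac} does not supply it without the unique-minimal-ideal hypothesis. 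So as written the proposal is a plan with its central step missing.

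The paper avoids the quotient entirely and instead adapts Parry's original argument directly to the $N$-distal setting. On the minimal piece one fixes a point $z$, chooses a nested sequence of open sets $S_{n}\downarrow\{z\}$ with $\mu(S_{n})\leq r^{n}$, and forms the countable partition $\xi=\{E_{n}\}$ with $E_{n}=S_{n}\setminus S_{n+1}$. Minimality forces every orbit to visit arbitrarily small $S_{n}$, so two points lying in the same atom of $\bigvee_{j\geq 0}f^{-j}\xi$ must be proximal; by $N$-distality each such atom has at most $N$ points, hence $\bigvee_{j\geq 0}f^{-j}\xi=\epsilon_{N}=\epsilon\mod\mu$ and $\xi$ is generating. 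Kolmogorov--Sinai then gives $h_{\mu}(f)=h_{\mu}(f,\xi)\leq H_{\mu}(\xi)$, and an explicit estimate shows $H_{\mu}(\xi)\to 0$ as $r\to 0$. This is the missing idea: rather than factoring onto a distal system and quoting \cite{Parry67}, one reruns Parry's generating-partition computation and observes that the only place distality was used---to conclude that the atoms of the generated $\sigma$-algebra are singletons---survives under $N$-distality once $\mu$ is non-atomic.
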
 

\begin{proof}

We first prove the Theorem for minimal systems and later we will show how to remove this hypothesis. 

By the previous discussion it is sufficient to prove that the metric entropy of $f$ is zero for all ergodic $f$-invariant measure. 
Let $\mu$ be an ergodic $f$-invariant measure. If $\mu$ is atomic then it must be supported on a periodic orbit or a fixed point and therefore its entropy is null, thus  we will assume $\mu$ is non-atomic.

By \cite[Corollary 1.12.10]{Boga} we can fix $0<r<\frac{1}{e}$ and chose a sequence of open sets 

\begin{equation*}
X=S_{0}\supseteq S_{1}\supseteq\cdots\supseteq S_{n}\supseteq\cdots
\end{equation*}
Such that $\mu(S_{n})\leq r^n\,\,\,\forall n\geq 0$ and $\bigcap\limits_{n=0}^{\infty}S_{n}=\lbrace z \rbrace$ for some $z\in X$. Define $\xi=\lbrace E_{0},E_{1},\ldots\rbrace$, where
\begin{equation}\label{defofE}
E_{0}=\lbrace z\rbrace\cup\left(S_{0}\setminus S_{1}\right)\mbox{\hspace{1cm}and\hspace{1cm}}  E_{n}=S_{n}\setminus S_{n+1}\,\,\,\forall n\geq 1.
\end{equation}

Clearly $\xi$ is a partition of $X$. Moreover, since the function $-x\log(x)$ is increasing on $(0,\frac{1}{e})$. It follows that

\begin{equation}\label{KSH1}
H_{\mu}(\xi)\leq-\sum\limits_{n}\mu\left(S_{n}\right)\log\left(\mu\left(S_{n}\right)\right)\leq-\sum\limits_{n}e^{-n}\log\left(e^{-n})\right)=\sum\limits_{n}ne^{-n}=\dfrac{e}{{(e-1)}^{2}}<\infty.
\end{equation}

Now, as we are in the minimal case. We claim that $\bigvee\limits_{j=0}^{\infty}f^{-j}(\xi)={\epsilon}_{N}$, where ${\epsilon}_{N}$ is the partition of $X$ in sets with $N$ or less elements. Indeed, take $B\in\bigvee\limits_{j=0}^{\infty}f^{-j}(\xi)={\epsilon}_{N}$ and $x\in B$. If $y\in B$, then $f^{n}(x),f^{n}(y)\in E_{i_{n}}$ for the same sequence. In fact, since

\begin{equation*}
\bigvee\limits_{j=0}^{\infty}f^{-j}(\xi)=\lbrace E_{i_{0}}\cap f^{-1}(E_{i_{1}})\cap\cdots\cap f^{-n+1}(E_{i_{n-1}})\cap\cdots\,\,:\,\,E_{i_{j}}\in \xi\rbrace,
\end{equation*}

it follows that $x,y\in B\subseteq f^{-n}(E_{i_{n}})$ for some sequence $\left(i_{0},i_{1},\dots\right)$. 

By the minimality of $f$, for each $m\in\N$ there is $k(m)$ such that $f^{k(m)}(x)\in S_{m}$. Then $f^{k(m)}\in E_{i_{k(m)}}\subseteq S_{m}$ for all $i_{k(m)}\geq m$, by (\ref{defofE}). We have $\diam(S_{n})\rightarrow 0$ when $n\rightarrow\infty$ because $S_{n}\rightarrow\lbrace z\rbrace$ when $n\rightarrow\infty$, therefore 
\begin{equation*}
\inf\limits_{n}\lbrace f^{n}(x),f^{n}(y)\rbrace=0.
\end{equation*}
Hence $y\in P(x)$ and then $\#(B)\leq \#(P(x))\leq N$, by $N$-distality of $f$. The claim is proved. Furthermore, as previously remarked, since $\mu$ is non-atomic, ${\epsilon}_{N}=\epsilon\mod\mu$. Thus, $\xi$ is a generating partition (cf. \cite[Definition 14.5]{Sinai76}).

Then, (\ref{KSH1}) and the claim implies that we can use the Kolmogorov-Sinai Theorem \cite[Theorem 14.3]{Sinai76} to obtain
%$\xi$ satisfies the hypotheses of Kolmogorov-Sinai theorem \cite[Theorem 14.3]{Sinai76}. Indeed,
 
\begin{eqnarray*}
h_{\mu}(f)=h_{\mu}(f,\xi)&\leq &H_{\mu}(\xi)\\
&=&-\sum\limits_{n=0}^{\infty}\mu(E_{n})\log(\mu(E_{n}))\\
&=&-\mu(E_{0})\log(\mu(E_{0}))-\sum\limits_{n=1}^{\infty}\mu(E_{n})\log(\mu(E_{n}))\\
&\leq &-\log(\mu(E_{0}))-\sum\limits_{n=1}^{\infty}r^{n}\log(r^{n})\\
&=&-\log(\mu(E_{0}))-\sum\limits_{n=1}^{\infty}nr^{n}\log(r)\\
&=&-\log(\mu(E_{0}))-\frac{r}{{\left(1-r\right)}^{2}}\log(r)\\
&\leq &\log\left(\frac{1-r}{1-2r}\right)-\frac{1}{{\left(1-r\right)}^{2}}r\log(r).
\end{eqnarray*}

As the last expression converges to zero when $r$ goes to zero, we have $h(f)=0$.

\vspace{0.1in}

We now deal with the general case. If $f$ is not minimal. Let $\nu$ be an $f$-invariant measure. It is well-known that $h_{\nu}(f)=h_{\nu}(f|_{\Omega}(f))$. Since $\Omega(f)\subseteq AP(f)$, it follows from \cite[Corollary 1.10]{Aus88} that $\Omega(f)=\cup M_{\la}$, where in the previous (necessarily disjoint) union each $M_{\la}$ is a minimal subset. Moreover, $\eta=\{M_{\la}\}$ is a measurable partition of $\Omega(F)$ by minimality of each $M_{\la}$ and second countable property of $X$. Then  there exists a family of measures $\{\nu_{\la}\}$ decomposing $\nu$. According to the above case, we have $h_{\nu_{\la}}(f|_{M_{\la}})=0$. Therefore,

$$ h_{\nu}(f|_{\Omega(f)})=\int_{\Omega(f)_\eta}h_{\nu_{\la}}(f|_{M_{\la}})d\nu_{\eta}=0,$$             

where $\Omega(f)_\eta$ denotes the factor space of $X$ with respect to $\eta$, and $\nu_{\eta}$ is the factor measure on $\Omega(f)_\eta$. This completes the proof.
\end{proof}

\textit{Acknowledgments: The authors would like to thank C.A. Morales for his great help and his lectures on topological dynamics during the second semester of 2017 at UFRJ which inspired this work.}

\end{document}